\theoremstyle{plain}
\newtheorem{thm}{Theorem}[section]
\newtheorem{prop}[thm]{Proposition}
\newtheorem{lem}[thm]{Lemma}
\newtheorem{cor}[thm]{Corollary}
\newtheorem{remark}[thm]{Remark}
\theoremstyle{definition}
\newtheorem{dfn}[thm]{Definition}
\theoremstyle{remark}
\newtheorem{rmk}[thm]{Remark}
\newcommand{\Ha}{\ \! ^\mu \! H}
\newcommand{\shgx}{\Sh(\mathbf G, \mathbf X)}
\newcommand{\ul}{^{(p)}}
\newcommand{\uls}{^{(p^2)}}
\newcommand{\gofa}{\mathbf G(\mathbf A)}
\newcommand{\qlbar}{\overline{\mathbf Q}_{p}}
\newcommand{\glnqlbar}{\GL(n, \qlbar)}
\newcommand{\galf}{ \Gal (\overline{F}/F)}
\newcommand{\rlipi}{R_{p, \iota}(\pi)}
\newcommand{\rec}{\rm{rec}}
\newcommand{\sesi}{^{\rm{ss}}}
\newcommand{\Th}{{\rm Th.}}
\newcommand{\Ths}{{\rm Ths.}}
\newcommand{\Cor}{{\rm Cor.}}
\newcommand{\Defin}{{\rm Def.}}
\newcommand{\diag}{{\operatorname{diag}}}
\newcommand{\Grad}{{\operatorname{Gr }}}
\newcommand{\Hom}{{\operatorname{Hom}}}
\newcommand{\lcm}{{\operatorname{lcm}}}
\newcommand{\ord}{{\operatorname{ord }}}
\newcommand{\rank}{\mbox{rank }}
\newcommand{\Spec}{{\operatorname{Spec }}}
\newcommand{\Sh}{{\operatorname{Sh }}}
\newcommand{\GL}{{\operatorname{GL}}}
\newcommand{\Gal}{{\operatorname{Gal}}}
\newcommand{\gerp}{{\mathfrak{p}}}
\newcommand{\calA}{{\mathcal{A}}}
\newcommand{\calO}{{\mathcal{O}}}
\newcommand{\card}{\mbox{ card}}
\def\2vector#1#2{\left( \begin{smallmatrix} #1 \\ #2 \end{smallmatrix}
\right)}
\def\deb{ \begin{equation} }
\def\fin{ \end{equation} }
\definecolor{Indigo}{rgb}{0.2,0.1,0.7}
\definecolor{Violet}{rgb}{0.5,0.1,0.7}
\definecolor{White}{rgb}{1,1,1}
\definecolor{Green}{rgb}{0.1,0.9,0.2}
\newcommand{\loccit}{{\em loc. cit.}}
\newcommand{\crys}{{\rm crys}}
\newcommand{\fil}{{\rm Fil}}
\newcommand{\np}{{\rm NP}}
\begin{document}

\title{The $\mu$-ordinary Hasse invariant of unitary Shimura varieties}
\date{\today}
\author{Wushi Goldring, Marc-Hubert Nicole}

\begin{abstract} We construct a generalization of the Hasse invariant for any Shimura variety of PEL type $A$ over a prime of good reduction, whose non-vanishing locus is the open and dense $\mu$-ordinary locus.

\end{abstract}

\address{W. G. Institut f\"ur Mathematik
Mathematisch-naturwissenschaftliche Fakult\"at
Universit\"at Z\"urich, Winterthurerstrasse 190, CH-8057 Z\"urich, SWITZERLAND}
\email{wushijig@gmail.com}
\address{M.-H. N. Aix Marseille UniversitŽ, CNRS, Centrale Marseille, I2M, UMR 7373, 13453 Marseille; Mailing Address: UniversitŽ d'Aix-Marseille, campus de Luminy, case 907, Institut mathŽmatique de Marseille (I2M), 13288 Marseille cedex 9, FRANCE}
\email{marc-hubert.nicole@univ-amu.fr}
\thanks{M.-H. N. thanks the Max Planck Institut fŸr Mathematik (MPIM, Bonn) for a year-long membership in 2011.}

\subjclass[2010]{Primary 14G35; Secondary 11F33, 11F55.}
\keywords{Hasse invariant, Shimura varieties, $\mu$-ordinary locus, Galois representations}
\maketitle

\numberwithin{equation}{section}

\section{Introduction}

Let $p$ be a prime number and let $sh$ be a special fiber modulo $p$ of a Shimura variety of PEL type at a neat level which is hyperspecial at $p$. The classical Hasse invariant $H$ is, roughly speaking, an automorphic form mod $p$ of weight $p-1$. The classical Hasse invariant satisfies the following four properties:
\begin{enumerate}
\item[(Ha1)]
The non-vanishing locus of $H$ is the ordinary locus of $sh$, namely the locus of points where the underlying abelian variety is ordinary.
\item[(Ha2)]
The construction of $H$ is compatible with varying the prime-to-$p$ level.
\item[(Ha3)]
A power of $H$ extends to the minimal compactification of $sh$.
\item[(Ha4)]
A power of $H$ lifts to characteristic zero.
\end{enumerate}

The Hasse invariant is the main tool to construct congruences modulo powers of $p$, both in the realms of automorphic forms and of Galois representations. However, when $\frak p$ is a prime of the reflex field $E$ of the Shimura variety for which the $\frak p$-adic completion $E_{\frak p}$ is strictly larger than $\mathbf Q_p$, the ordinary locus is empty and the Hasse invariant is identically zero.

To fix this, we construct a generalized Hasse invariant satisfying properties (Ha2)--(Ha4) and a ``$\mu$-ordinary" analogue of (Ha1) for any Shimura variety ${\rm Sh}(\mathbf G, \mathbf X)$ of PEL-type such that $\mathbf G$ is a group of unitary similitudes. The non-vanishing locus of our generalized Hasse invariant is the $\mu$-ordinary locus, which, as Moonen has shown \cite[\Ths 1.3.7, 3.2.7]{MoonenSerreTate}, is simultaneously the largest stratum of the Newton and of the Ekedahl-Oort stratifications. As an application, we use our new Hasse invariant to generalize the main result of \cite{WushiGalrepshldsI}, which concerns attaching Galois representations to automorphic representations whose archimedean component is a holomorphic limit of discrete series.

The main idea in this paper is to use the action of Frobenius $\mathbf F$ on the crystalline cohomology of abelian varieties. The use of this cohomology theory allows us to divide by $p$ i.e., to make sense of the operator ``$\wedge^i\mathbf F/p^j$'' for well-chosen positive integers $i$ and $j$, see below. In the main body of the paper, we pursue the Newton point of view and apply the Newton-Hodge decomposition of Katz, a convenient tool in this context. In the first appendix, we illustrate how we can retrieve most of our results purely from the Ekedahl-Oort point of view. In the second appendix, we show how we can avoid the use of crystalline cohomology when the totally real field $F^+$ is equal to $\mathbf Q$ or, equivalently, that $\mathbf G(\mathbf R)$ is isomorphic to the unitary group $\mathbf{GU}(a,b)$ for some $a,b \in \mathbf N_{>0} $.

We note that this article is the result of merging our two arXiv postings \cite{GoldringNicole} and \cite{GoldringNicole2}.    We also remark that, a little over one year after we posted \cite{GoldringNicole2} on arXiv, Koskivirta and Wedhorn posted a preprint in which they construct generalized Hasse invariants for Shimura varieties of Hodge type, see \cite{KoWe}.

\subsection{Main Results} \label{sec main result}
Throughout this paper, fix an isomorphism $\iota: \qlbar \stackrel{\sim}{\longrightarrow }\mathbf C$.

Suppose ${\mathcal U}=(B, V,* <,>, \tilde{h})$ is a Kottwitz datum with associated Shimura variety $\shgx$, such that the center of the simple $\mathbf Q$-algebra $B$ is a totally imaginary quadratic field extension $F$  of a totally real field $F^+$ \cite[3.1]{WushiGalrepshldsI}. Let $d$ be the degree of $F^+$ over $\mathbf Q$. Suppose $p$ is a prime of good reduction for $\mathcal U$ (see \loccit \ \S3.3) and $\mathcal K \ul \subset \mathbf G(\mathbf A_f^{p})$ is a neat, open compact subgroup.

Let $E=E(\mathbf G, \mathbf X)$ be the reflex field of $\shgx$.
Let $Sh:= Sh_{\mathcal K \ul}$ be the Kottwitz integral model of $Sh(\mathbf G, \mathbf X)$ at level $\mathcal K\ul$ over $\mathbf Z_{(p)}\otimes \mathcal O_E$. Let $\gerp$ be a prime of $E$ above $p$. Denote by $sh:= sh_{\mathcal K\ul, \gerp}$ the special fiber of $Sh_{\mathcal K\ul}$ at $\gerp$. Let $\omega$ be the Hodge line bundle of $sh$ as defined in \S\ref{sec f crystals hodge filtration}.

\begin{thm} There exists an explicit positive integer $m \in \mathbf Z_{\geq 1}$ and a section \begin{equation} ^\mu \! H \in H^0(sh, \omega^{m}) \label{eq mu Hasse} \end{equation} satisfying the following four properties:
\begin{enumerate}
\item[($\mu$-Ha1)] The non-vanishing locus of $^\mu \! H$ is the $\mu$-ordinary locus of $sh$, as defined in \cite{RapRic} and \cite{Wed}.

\item[($\mu$-Ha2)]
The construction of $^\mu \! H$ is compatible with varying the level $\mathcal K\ul$.
\item[($\mu$-Ha3)]
The section $^\mu \! H$ extends to the minimal compactification.
\item[($\mu$-Ha4)]
A power of $^\mu \! H$ lifts to characteristic zero.
\end{enumerate}

\label{th mu ordinary hasse invariant}\end{thm}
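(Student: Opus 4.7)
The plan is to construct $\Ha$ via the action of Frobenius on crystalline cohomology together with Katz's Newton--Hodge decomposition, as outlined in the introduction. First, work with the Dieudonn\'e crystal $\mathbb{D} = \mathbb{D}(\mathcal A[p^\infty])$ of the universal abelian scheme on $sh$, equipped with Frobenius $\mathbf F$ and Hodge filtration $\omega_{\mathcal A} = \fil^1 \subset \mathbb{D}$. The $\mathcal O_F \otimes_{\mathbf Z} \mathbf Z_p = \prod_{v \mid p} \mathcal O_{F_v}$-action splits $\mathbb{D} = \bigoplus_{v} \mathbb{D}_v$, and after base change to $W(\Fp)$ each $\mathbb{D}_v$ further decomposes into $\tau$-isotypic pieces $\mathbb{D}_\tau$ indexed by embeddings $\tau \colon F_v \hookrightarrow \qlbar$; these pieces are permuted cyclically within each $\sigma$-orbit $\mathcal{C}$ by $\mathbf F$. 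The Hodge filtration restricts to a subspace $\omega_\tau \subset \mathbb{D}_\tau$ whose dimension equals the signature of the PEL datum at $\tau$, and $\omega = \det \omega_{\mathcal A}$ decomposes along the same indexing.

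The core of the argument is the following. By Moonen's theorem, on the $\mu$-ordinary locus the Newton polygon equals the $\mu$-ordinary polygon, whose slopes are combinatorially determined by the signature. Katz's Newton--Hodge decomposition on this locus then splits each $\mathbb{D}_v$ into isoclinic summands of the expected slopes. For each $\mathbf F$-orbit $\mathcal{C}$ of embeddings, I would define an operator $\mathrm T_{\mathcal{C}} := \wedge^{i_{\mathcal{C}}} \mathbf F^{|\mathcal{C}|} / p^{j_{\mathcal{C}}}$ on an appropriate exterior power of $\bigoplus_{\tau \in \mathcal{C}} \mathbb{D}_\tau$, where $(i_{\mathcal{C}}, j_{\mathcal{C}})$ is chosen so that the total Hodge dimension along $\mathcal{C}$ matches the partial sum of the largest $i_{\mathcal{C}}$ slopes of the $\mu$-ordinary polygon on this orbit. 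The crucial integrality statement is that on the $\mu$-ordinary locus the $p$-adic valuation of $\wedge^{i_{\mathcal{C}}} \mathbf F^{|\mathcal{C}|}$ equals exactly $j_{\mathcal{C}}$, so the denominator cancels and $\mathrm T_{\mathcal{C}}$ is integral everywhere on $sh$; off this locus the Newton slopes strictly exceed the Hodge slopes on some sub-piece, forcing $\mathrm T_{\mathcal{C}}$ to vanish there. Tensoring the operators $\mathrm T_{\mathcal{C}}$ across all orbits and restricting to $\wedge^{\rm top}\fil^1$ via the Hodge filtration then yields a section of $\omega^m$ for an explicit $m$; this is $\Ha$.

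Properties ($\mu$-Ha1) and ($\mu$-Ha2) follow directly from the construction: the operator $\mathrm T_{\mathcal{C}}$ is invertible at a geometric point exactly when Newton and Hodge slopes coincide on $\mathbb{D}_{\mathcal{C}}$, and the construction depends only on the universal abelian scheme, hence is automatically level-equivariant. For ($\mu$-Ha3), I would invoke Lan's theory of toroidal and minimal compactifications for PEL Shimura varieties, together with Koecher's principle: the Dieudonn\'e crystal extends canonically to the semiabelian degenerations over the toroidal boundary, so $\Ha$ extends there by construction and then descends to the minimal compactification by Koecher. For ($\mu$-Ha4), I would lift a fixed power of $\Ha$ to characteristic zero by combining the ampleness of a suitable power of $\omega$ on the minimal compactification with cohomological vanishing, in the style of Lan's treatment of the classical ordinary case.

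The main obstacle I anticipate lies in the integrality of $\mathrm T_{\mathcal{C}}$ away from the $\mu$-ordinary locus: whereas Newton--Hodge makes divisibility of $\wedge^{i_{\mathcal{C}}} \mathbf F^{|\mathcal{C}|}$ by $p^{j_{\mathcal{C}}}$ transparent on the $\mu$-ordinary stratum, establishing the same divisibility on the more degenerate strata requires an integral statement valid on the whole special fiber. This is precisely where crystalline cohomology is essential rather than merely de Rham: the Frobenius-divisibility estimate must hold on the integral $F$-crystal even though no integral Newton--Hodge splitting exists outside the $\mu$-ordinary locus.
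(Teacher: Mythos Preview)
Your overall architecture matches the paper's: decompose the Dieudonn\'e crystal by embeddings $\tau$, take suitable exterior powers of Frobenius divided by a power of $p$, and use Katz's Newton--Hodge decomposition on the $\mu$-ordinary locus together with Moonen's description of that locus. You also correctly flag the integrality of $\wedge^{i}\mathbf F^{e}/p^{j}$ off the $\mu$-ordinary locus as the delicate point; the paper resolves it exactly by proving divisibility on the open dense $\mu$-ordinary locus and then extending via de Jong's results on $F$-crystals over smooth bases.

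There are, however, two genuine gaps.

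\textbf{First, how one obtains a section of a power of $\omega$.} You write that one ``restricts to $\wedge^{\rm top}\fil^1$''; this is not how the section arises. The paper shows that the operator $\wedge^{d_i}\mathbf F^{e_\tau}/p^{c_i}$ on $H^{d_i}_{dR}(\mathcal A)_{\tau_i}$ \emph{annihilates} $\fil^1 H^{d_i}_{dR}(\mathcal A)_{\tau_i}$ (Lemma~\ref{restrictionToFil}), hence induces a $\sigma^{e_\tau}$-linear endomorphism of the line $\Grad^0 H^{d_i}_{dR}(\mathcal A)_{\tau_i}\cong\omega_{\tau_i}^\vee$, which is the same thing as a section of $\omega_{\tau_i}^{p^{e_\tau}-1}$. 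Proving this annihilation is the real content: it requires knowing that on the $\mu$-ordinary locus the Hodge filtration $\fil^1_{\tau_i}$ coincides with the ``high-slope'' part $\bigoplus_{j\geq i}\overline{M_{\tau_i}^{[j]}}$ of the Newton--Hodge decomposition (the paper's \Th~\ref{th fil1 slope decomp}). This compatibility is not formal; the paper proves it by an induction using Mazur's theorem ($\fil^j=\overline{\mathbf F^{-1}(p^j M)}$) and the fact that isogenies respect slope decompositions. Your sketch does not contain this step, and without it there is no reason the operator should land in, or factor through, the correct line.

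\textbf{Second, one invariant per embedding, not per orbit.} You propose a single pair $(i_{\mathcal C},j_{\mathcal C})$ per Frobenius orbit $\mathcal C$. The paper instead constructs a separate invariant $^{\tau_i}\!H$ for \emph{each} embedding $\tau_i$ in the orbit, with its own $(d_i,c_i)$ depending on the rank of $\fil^1_{\tau_i}$. The point is that $^{\tau_i}\!H(A)\neq 0$ detects exactly whether the Newton polygon of $M_{\tau_i}$ touches the $\mu$-ordinary polygon at the single breakpoint $(d_i,c_i)$; one needs the full collection, one for each breakpoint, to force the Newton polygon to equal $\ord_{\frak o_\tau}(n,\frak f)$. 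A single operator per orbit would not obviously cut out the $\mu$-ordinary locus.

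Once these two points are in place, your treatment of ($\mu$-Ha2)--($\mu$-Ha4) is in line with the paper's (which simply cites the arguments of \cite{WushiGalrepshldsI}).
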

\noindent
We call $^\mu \! H$ the $\mu$-ordinary Hasse invariant.

\begin{rmk} The exponent $m$ in \Th~\ref{th mu ordinary hasse invariant} is explicitly defined in \Defin~\ref{def exponent m}, in terms of the action of Frobenius on the embeddings of $F$. In case $p$ remains prime in $F$, the formula one finds there simplifies to $m=p^{2d}-1$. \end{rmk}

By ampleness of the Hodge line bundle $\omega$ on the minimal compactification (cf. \cite[Th. 7.2.4.1, no.2]{Lan}), we deduce the following corollary:

\begin{cor} \label{cor affine}
The $\mu$-ordinary locus $sh_{}^{min, \mu-{\rm ord}}$ in the minimal compactification $sh^{{\rm min}}$ is affine.
\end{cor}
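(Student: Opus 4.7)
The plan is to combine Theorem~\ref{th mu ordinary hasse invariant} with the ampleness of $\omega$ on $sh^{\rm min}$ cited from \cite{Lan}, and to invoke the classical fact that the complement of the vanishing locus of a global section of an ample line bundle on a projective scheme is affine.

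First, by property $(\mu$-Ha3$)$, the $\mu$-ordinary Hasse invariant extends to a section $^\mu H^{\rm min} \in H^0(sh^{\rm min}, \omega^m)$. By $(\mu$-Ha1$)$ the non-vanishing locus of the restriction of $^\mu H^{\rm min}$ to $sh$ is exactly the $\mu$-ordinary locus. Interpreting $sh^{{\rm min},\mu\text{-ord}}$ as the non-vanishing locus of $^\mu H^{\rm min}$ on all of $sh^{\rm min}$ --- the natural extension of the $\mu$-ordinary notion to the minimal compactification afforded by this Hasse invariant --- it suffices to show that this open subscheme is affine.

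Since $\omega$ is ample on the projective scheme $sh^{\rm min}$, so is $\omega^m$, and a further power $\omega^{mN}$ is very ample for some $N\geq 1$. Fix a projective embedding $j\colon sh^{\rm min}\hookrightarrow \P^n$ realizing $\omega^{mN}$ as $j^*\calO(1)$. Then $(^\mu H^{\rm min})^N \in H^0(sh^{\rm min}, \omega^{mN})$ is the pullback of a global section of $\calO(1)$, whose vanishing locus is a hyperplane $\calH \subset \P^n$. Consequently the non-vanishing locus of $(^\mu H^{\rm min})^N$ equals $j^{-1}(\P^n\setminus \calH) = sh^{\rm min}\cap (\P^n\setminus\calH)$, a closed subscheme of the affine variety $\P^n\setminus\calH$, hence itself affine. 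Since $(^\mu H^{\rm min})^N$ and $^\mu H^{\rm min}$ have the same non-vanishing locus, this coincides with $sh^{{\rm min},\mu\text{-ord}}$.

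The main point to verify --- indeed essentially the only one --- is the identification of $sh^{{\rm min},\mu\text{-ord}}$ with the non-vanishing locus of the extended section $^\mu H^{\rm min}$; once the existence of this extension is granted by $(\mu$-Ha3$)$, the rest is standard projective algebraic geometry and requires no further input from the Shimura variety side.
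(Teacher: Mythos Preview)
Your proof is correct and follows the same approach as the paper, which deduces the corollary in one line from Theorem~\ref{th mu ordinary hasse invariant} and the ampleness of $\omega$ on $sh^{\rm min}$; you have simply unpacked the standard fact that the non-vanishing locus of a section of an ample line bundle on a projective scheme is affine. One small point: to guarantee that $(^\mu H^{\rm min})^N$ arises as the pullback of a hyperplane section, you should specify that the embedding $j$ is taken via the complete linear system of $\omega^{mN}$ (or simply cite the general fact directly rather than rederiving it).
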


\subsection{Application to Galois representations}

We also obtain an application to the construction of automorphic Galois representations which generalizes \cite[\Th 1.2.1]{WushiGalrepshldsI}. To state the result we need some notation.

Suppose $\pi$ is a cuspidal automorphic representation of $\gofa$ with $v$-adic component $\pi_v$ for every place $v$. Given a prime $p$, let ${\mathcal P}^{(p)}$ be the set of primes $v$ different from $p$ such that $\pi_v$ is unramified and $\mathbf G$ is unramified at $v$. Let $\frak P ^{(p)}$ be the set of primes of $F$ that are split over $F^+$ and lie over some $v \in {\mathcal P}^{(p)}$.

Assume $w \in \frak P^{(p)}$. One has a decomposition $\mathbf G(\mathbf Q_v) \cong \GL(n, F_{w}) \times G_{v,{\rm rest}}$,  for some group $G_{v,{\rm rest}}$, where $n$ is given by $n^2=\dim_F{\rm End}_B V$. Write $\pi_v \cong \pi_{w}\otimes \pi_{v,{\rm rest}}$, with $\pi_{w}$ a representation of $\GL(n, F_{w})$  and $\pi_{v,{\rm rest}}$ a representation of $G_{v, {\rm rest}}$.

\begin{thm}
Suppose $\pi$ is a cuspidal automorphic representation of $\mathbf G(\mathbf A)$ whose archimedean component $\pi_{\infty}$ is an $\mathbf X$-holomorphic limit of discrete series representation of $\mathbf G( \mathbf R)$ {\rm (}see \cite[\S2.3]{WushiGalrepshldsI}{\rm )}. Assume $p$ is a prime of good reduction for $\mathcal U$.
Then there exists a unique semisimple Galois representation
\begin{equation}\rlipi:\galf \longrightarrow  \glnqlbar\label{eq gal rep lds} \end{equation}
satisfying the following two conditions:
\begin{enumerate} \item[Gal1.]
If $v \in \mathcal P^{(p)}$ and $w$ is a prime of $F$ dividing $v$ then $\rlipi$ is unramified at $w$. In particular $\rlipi$ is unramified at all but finitely many places.

\item[Gal2.]  If $w \in \frak P^{(p)}$ then there is an isomorphism of Weil-Deligne representations
 \begin{equation} (\rlipi | _{W_{F_{w}}})^{\sesi}\cong \iota^{-1}{\rec}\big( \pi_{w} \otimes |\cdot|_{w}^{\frac{1-n}{2}}\big), \end{equation} where $W_{F_{w}}$ is the Weil group of $F_{w}$, the superscript $\sesi$ denotes semi-simplification and $\rec$ is the local Langlands correspondence, normalized as in {\rm \cite{HT}}.
 \end{enumerate}
\label{cor main unitary 1} \end{thm}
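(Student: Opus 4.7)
The plan is to run the congruence-theoretic argument of \cite{WushiGalrepshldsI}, replacing the classical Hasse invariant throughout by the $\mu$-ordinary Hasse invariant $^\mu\! H$ of Theorem~\ref{th mu ordinary hasse invariant}. The earlier construction required the ordinary locus of $sh$ to be non-empty---equivalently, that $E_\gerp = \mathbf Q_p$---so that the classical Hasse invariant $H$ would give a non-zero mod-$p$ section; by ($\mu$-Ha1), $^\mu\! H$ is non-zero on the always open and dense $\mu$-ordinary locus, and this restriction disappears entirely.

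Concretely, the hypothesis that $\pi_\infty$ is an $\mathbf X$-holomorphic limit of discrete series implies, by the coherent-cohomology description of \cite[\S2--3]{WushiGalrepshldsI}, that $\pi$ contributes a non-zero Hecke-isotypic section $f$ of some automorphic vector bundle $\calV_\lambda$ on $sh$, whose weight $\lambda$ lies on the wall of the holomorphic Weyl chamber supported by the weight of $\omega$. For each $N \geq 1$, the product $f \cdot (^\mu\! H)^N \in H^0(sh, \calV_\lambda \otimes \omega^{mN})$ is non-zero by ($\mu$-Ha1), extends to the minimal compactification by ($\mu$-Ha3), and---after raising $N$ to an appropriate multiple---lifts to characteristic zero by ($\mu$-Ha4). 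For $N$ sufficiently large, the shifted weight $\lambda + mN$ becomes regular, so the lifted form corresponds to a cuspidal automorphic representation $\pi^{(N)}$ of $\mathbf G(\mathbf A)$ whose archimedean component is an honest holomorphic discrete series; to $\pi^{(N)}$ one attaches a Galois representation $R_{p,\iota}(\pi^{(N)})$ satisfying Gal1--Gal2 by the regular-weight case of the problem (\cite{HT} and its subsequent generalizations).

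The key observation is then that multiplication by $^\mu\! H$ is Hecke-equivariant away from $p$, so the local components $\pi^{(N)}_v$ and $\pi_v$ coincide at every $v \in \calP^{(p)}$. Thus the formal conditions Gal1 and Gal2 imposed on $R_{p,\iota}(\pi^{(N)})$ relative to $\pi^{(N)}$ coincide literally with Gal1 and Gal2 relative to $\pi$. We may therefore define $\rlipi := R_{p,\iota}(\pi^{(N)})$ for any sufficiently large $N$, and this representation satisfies Gal1 and Gal2 as stated. Uniqueness of $\rlipi$ follows from the Chebotarev density theorem applied to these conditions, which also confirms that the definition is independent of $N$.

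The principal obstacle is to confirm that the shifted weight $\lambda + mN$ is genuinely regular for some effective $N$; this hinges on the $\mathbf X$-holomorphic hypothesis, which confines $\lambda$ to a wall supported by the single ray spanned by the weight of $\omega$, together with the explicit form of the exponent $m$ (\Defin~\ref{def exponent m}). Once regularity has been achieved, the rest of the argument is formal, propagated through the four properties ($\mu$-Ha1)--($\mu$-Ha4) of $^\mu\! H$ and the known regular-weight constructions.
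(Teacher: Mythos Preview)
Your overall strategy---replace the classical Hasse invariant in the argument of \cite[\S6]{WushiGalrepshldsI} by $^\mu\!H$---is exactly what the paper does, and the paper's proof is nothing more than a pointer to that argument together with the observation that properties ($\mu$-Ha1)--($\mu$-Ha4) are precisely what that argument consumes.

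However, your sketch of the cited argument contains a genuine error. You write that ``multiplication by $^\mu\!H$ is Hecke-equivariant away from $p$, so the local components $\pi^{(N)}_v$ and $\pi_v$ coincide at every $v\in\calP^{(p)}$'' and then set $\rlipi:=R_{p,\iota}(\pi^{(N)})$. Multiplication by $^\mu\!H$ is indeed Hecke-equivariant, but this only tells you that $f\cdot(^\mu\!H)^N$ has the same Hecke eigenvalues as $f$ \emph{modulo $p$}. The lift of $f\cdot(^\mu\!H)^N$ to characteristic zero furnished by ($\mu$-Ha4) is not canonical, and the resulting $\pi^{(N)}$ has Hecke eigenvalues that are merely \emph{congruent} to those of $\pi$ modulo $p$ (or modulo a higher power of $p$ if one works harder), not equal to them. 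Consequently $R_{p,\iota}(\pi^{(N)})$ is not the representation you want; it only approximates it modulo $p$.

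The actual construction in \cite[\S6]{WushiGalrepshldsI} is a congruence argument in the tradition of Deligne--Serre and Taylor: one produces, for each $N$, a regular-weight form congruent to $f$ modulo $p^N$, obtains the associated Galois representation, and then patches these together via a pseudorepresentation (or compactness) argument to produce $\rlipi$ as a $p$-adic limit. Your proposal skips this limiting step entirely. The phrase ``congruence-theoretic argument'' in your first sentence suggests you may be aware of this, but the body of your sketch does not reflect it.
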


\begin{rmk} The argument given in \S6 of \cite{WushiGalrepshldsI} carries over almost verbatim (see \S\ref{sec correction} for a minor correction) and shows that our main result \Th~\ref{th mu ordinary hasse invariant} implies our application \Th~\ref{cor main unitary 1}. \end{rmk}

\section{Preliminaries on $F$-crystals and the Hodge filtration} \label{sec f crystals hodge filtration}

Let $E \subset E'\subset \mathbf C$, where $E'$ is a finite extension of $E$ such that  $B$ is split over $E'$ and
for every embedding $\tau: F\hookrightarrow \mathbf C$, one has $\tau(F) \subset E'$. Denote by $\gerp$ a prime of $E$ over $p$, and by $\gerp'$ a prime of $E'$ over $\gerp$. Pick $\kappa$ to be the smallest finite field containing the residue fields $\mathcal O_{E'}/\gerp'$, for all $\gerp'$ over $\gerp$. Via $\iota: \qlbar \stackrel{\sim}{\longrightarrow } \mathbf C$, there is a bijection $\tau \mapsto \iota^{-1} \circ \tau$ between the set of complex embeddings $\tau:F \hookrightarrow \mathbf C$ and the set of $p$-adic embeddings $\iota^{-1} \circ \tau:F \hookrightarrow \qlbar$ and we denote either type of embedding simply by $\tau$. After fixing an embedding  $W(\kappa) \hookrightarrow \mathbf C$, there is further a bijection with the set of embeddings of $\mathcal O_F$ into $W(\kappa)$, and also with the set of homomorphisms to $\kappa$, noted $\Hom(\mathcal O_F, \kappa)$. The absolute Frobenius, noted $\sigma$, acts via composition on $\Hom(\calO_F, \kappa)$.

Let $\mathcal T$ be the set of complex embeddings of $F$. Let $r$ be the rank of $B$ over $F$. From here onwards, fix the prime $\gerp$ in $\calO_E$. Let $S$ be a smooth $\Spec (\mathcal O_E/\gerp)$-scheme and $\pi:A\rightarrow S$ a $\mathcal U\ul$-enriched abelian scheme \cite[\S3.4]{WushiGalrepshldsI}.
Let $\omega = \bigwedge^{\rm top} \pi_* \Omega^1_{A/S}$ be the Hodge bundle i.e., the determinant of the pushforward of the sheaf of relative differentials on $A$.
After extending scalars to $\kappa$, the Hodge bundle decomposes according to the embeddings $\tau \in \mathcal T$ and the standard idempotents in $M_r(\kappa)$:
\[ \omega = \bigotimes_{\tau \in \mathcal T} \omega_{\tau}^{\otimes r} \]
The Dieudonn\'e crystal $H^1_{\crys}(A)$ also decomposes accordingly:
\[ H^1_{\crys}(A) = \bigoplus_{\tau\in \mathcal T} H^1_{\crys}(A)_{\tau}^{\oplus r}. \]

\noindent Similarly for de Rham cohomology, one has:
\[ H^1_{dR}(A) = \bigoplus_{\tau\in\mathcal T} H^1_{dR}(A)_{\tau}^{\oplus r}. \]

Put $H^{d}_{\crys}(A)_{\tau_i}=\bigwedge^{d}H^1_{\crys}( A)_{\tau_i}$ and $H^{d}_{dR}(A)_{\tau_i}=\bigwedge^{d}H^1_{dR}(A)_{\tau_i}$

\noindent Let $\fil^\bullet$ denote the Hodge filtration on the de Rham cohomology. Put $\fil^1_{\tau}= \fil^1 H^1_{dR}( A) \cap H^1_{dR}(A)_{{\tau}}$. Then  $(\rank \fil^1_{{\tau}}, \rank \fil^1_{{\overline{\tau}}})$ is the signature corresponding to the conjugate pair of embeddings $(\tau, \overline{\tau})$.

 Given $\tau \in \mathcal T$, define $\frak o_{\tau}$ to be the orbit of $\tau$ under the action of the absolute Frobenius $\sigma$. Let $e_{\tau}$ denote the cardinality of the orbit $\frak o_{\tau}$. Write $\mathfrak o_{\tau}=\{\tau_1, \ldots , \tau_{e_{\tau}}\}$ in such a way that $\rank \fil^1_{\tau_1} \geq \cdots \geq \rank \fil^1_{\tau_{e_{\tau}}}$. The rank of $\ H^1_{dR}(A)_{\tau}$ is independent of $\tau$; we call it $n$. Define the multiplication type $\frak f:\frak o_{\tau}\rightarrow \{0,1,\ldots, n\}$ associated to $\frak o_{\tau}$ by $\frak f(\tau_i)=\rank \fil^1_{\tau_i}$. To the pair $(n, \frak f)$ depending on $\frak o_{\tau}$, Moonen \cite[1.2.5]{MoonenSerreTate} associates a polygon $\ord_{\frak o_{\tau}}(n, \frak f)$ that we call the $\mu$-ordinary polygon associated to $\frak o_{\tau}$. Recall that the slopes $a_j$, $1 \leq j \leq n$, of $\ord_{\frak o_{\tau}}(n, \frak f)$ are defined by \begin{equation} a_j := \card(\{\tau' \in \frak o_{\tau}|\frak f(\tau') >n-j\}). \label{eq def slope ord}\end{equation}

Now suppose $S=\Spec\ k$, where $k$ is an algebraically closed field, so that $A$ represents a geometric point of $sh$. Put $M=H^1_{crys}(A)$ (resp. $M_{\tau}=H^1_{crys}(A)_{\tau}$). Define the Hodge (resp. Newton) polygon of $M_{\tau}$ to be the Hodge (resp. Newton) polygon of $(M_{\tau},\mathbf F^{e_{\tau}})$. Note that in general the Newton polygon of $M_{\tau}$ does not depend on ${\tau}$ but the Hodge polygon does.

\begin{lem} Let $\mathcal T=\coprod \mathfrak o_{\tau}$ be the orbit decomposition of $\mathcal T$ according to the action of Frobenius. Let $M=H^1_{\crys}(A)$. Then the Newton polygon of $(M, \mathbf F)$ is the Newton polygon $\np(sh^{\mu-\ord})$ of the $\mu$-ordinary locus (i.\ e.,\ $A$ is $\mu$-ordinary) if and only if for all $\tau \in \mathcal T$ the Newton polygon of $M_{\tau}$ is the $\mu$-ordinary polygon $\ord_{\frak o_{\tau}}(n, \frak f)$.  \label{lemma orbits mu ordinary} \end{lem}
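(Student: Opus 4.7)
The plan is to exploit the Frobenius-orbit decomposition $\mathcal T = \coprod_{\mathfrak o} \mathfrak o$. Since $\mathbf F$ sends $M_\tau$ into $M_{\sigma\tau}$, for each orbit $\mathfrak o$ the subspace $N_{\mathfrak o} := \bigoplus_{\tau' \in \mathfrak o} M_{\tau'}$ is stable under $\mathbf F$, yielding a decomposition of $F$-isocrystals
\[
(M,\mathbf F) \;=\; \bigoplus_{\mathfrak o} (N_{\mathfrak o}, \mathbf F)^{\oplus r}.
\]
Consequently the Newton polygon of $(M,\mathbf F)$ is obtained by merging slope multi-sets of the $(N_{\mathfrak o}, \mathbf F)$, each with multiplicity $r$. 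So the question reduces to analyzing a single orbit.

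The second step rewrites the Newton polygon of $(N_{\mathfrak o_\tau}, \mathbf F)$ in terms of that of the single piece $(M_\tau, \mathbf F^{e_\tau})$. Picking a representative $\tau = \tau_1 \in \mathfrak o_\tau$ and transporting bases via the isomorphisms $\mathbf F : M_{\tau_i}[1/p] \xrightarrow{\sim} M_{\tau_{i+1}}[1/p]$, $\mathbf F$ on $N_{\mathfrak o_\tau}$ acquires a cyclic block form whose only non-identity wrap-around block is the matrix of $\mathbf F^{e_\tau}$ on $M_{\tau_1}$. A standard computation for such cyclic $F$-isocrystals then gives: if $(M_{\tau_1}, \mathbf F^{e_\tau})$ has slopes $\mu_1, \dots, \mu_n$ as a $\sigma^{e_\tau}$-isocrystal, then $(N_{\mathfrak o_\tau}, \mathbf F)$ has slopes $\mu_1/e_\tau, \dots, \mu_n/e_\tau$, each with multiplicity $e_\tau$. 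Thus the two Newton polygons determine one another by a simple rescaling.

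Matching these pieces against the definitions, I expect the $\mu$-ordinary polygon $\np(sh^{\mu-\ord})$ to decompose orbit-by-orbit in precisely the same fashion: the integer slopes $a_j$ of $\ord_{\mathfrak o}(n,\mathfrak f)$ from \eqref{eq def slope ord}, divided by $e_\tau$ and each repeated $e_\tau r$ times, concatenate to form $\np(sh^{\mu-\ord})$. Granting this bookkeeping, the ``if'' direction is immediate: orbitwise equality of Newton polygons propagates to total equality via the first two steps.

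For the converse I would invoke Moonen's orbit-wise minimality \cite[\Th~1.3.7]{MoonenSerreTate}: on each orbit, $\ord_{\mathfrak o_\tau}(n,\mathfrak f)$ is the generic (lowest) Newton polygon compatible with the Hodge datum $(n,\mathfrak f)$ restricted to $\mathfrak o_\tau$. Hence the Newton polygon of each $(M_\tau, \mathbf F^{e_\tau})$ lies on or above $\ord_{\mathfrak o_\tau}(n,\mathfrak f)$; if the total polygon equals $\np(sh^{\mu-\ord})$, the orbit-wise inequalities must all be equalities. The main obstacle I anticipate is not any single step but the bookkeeping of conventions—tracking how the integer slopes $a_j$ in \eqref{eq def slope ord} rescale to Newton slopes in $[0,1]$, and verifying that this rescaling matches the cyclic reduction from $(N_{\mathfrak o}, \mathbf F)$ to $(M_\tau, \mathbf F^{e_\tau})$ on the nose.
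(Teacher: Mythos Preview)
Your argument is correct and is essentially what lies behind the reference the paper invokes: the paper's own proof is the single line ``See \cite[2.2.1]{Wed}'', so there is nothing to compare against beyond that citation. Your orbit-by-orbit reduction, the cyclic rescaling between the slopes of $(N_{\mathfrak o_\tau},\mathbf F)$ and those of $(M_\tau,\mathbf F^{e_\tau})$, and the appeal to orbitwise minimality for the converse are precisely the ingredients Wedhorn assembles; the only point worth double-checking in your write-up is the elementary combinatorial fact that if each orbit polygon lies on or above its $\mu$-ordinary target and the merged total polygon is an equality, then each orbit polygon is forced to equality as well.
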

\begin{proof} See \cite[2.2.1]{Wed}. \end{proof}

\begin{lem} Suppose the Newton polygon of $M_{\tau}$ is the $\mu$-ordinary polygon $\ord_{\frak o_{\tau}}(n, \frak f)$. Then the Hodge polygon of $M_{\tau}$ also coincides with $\ord_{\frak o_{\tau}}(n, \frak f)$. \label{lemma hodge newton polygon} \end{lem}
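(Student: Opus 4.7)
The plan is to combine Mazur's inequality with the explicit isocline structure of the $\mu$-ordinary $F$-crystal. First, I would observe that both polygons under comparison have integer slopes: by definition, the slopes $a_j$ of $\ord_{\frak o_\tau}(n, \frak f)$ are non-negative integers in $[0, e_\tau]$, and the Hodge polygon of $(M_\tau, \mathbf F^{e_\tau})$ has integer slopes in the same range, since each Frobenius component $\mathbf F : M_{\tau_i} \to M_{\tau_{i+1}}$ has elementary divisors in $\{1, p\}$ with multiplicities controlled by $\frak f(\tau_i)$. Combined with Mazur's inequality (Newton $\geq$ Hodge, with identical endpoints) and the hypothesis that the Newton polygon of $M_\tau$ equals $\ord_{\frak o_\tau}(n, \frak f)$, it suffices to show that the Hodge polygon attains the $\mu$-ordinary polygon.

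To this end, I would invoke the explicit description of the $\mu$-ordinary Dieudonn\'e module due to Moonen \cite{MoonenSerreTate} (cf.\ Wedhorn \cite{Wed}). Under the $\mu$-ordinary hypothesis, the $F$-crystal $(M_\tau, \mathbf F^{e_\tau})$ decomposes at the lattice level as a direct sum
\[
(M_\tau, \mathbf F^{e_\tau}) = \bigoplus_c (N_c, \mathbf F^{e_\tau}|_{N_c})
\]
of isocline $F$-crystals indexed by the distinct integer slopes $c$ appearing in $\ord_{\frak o_\tau}(n, \frak f)$, with $\rank N_c$ equal to the multiplicity of $c$. Because $c$ is an integer and the decomposition holds integrally (not merely after inverting $p$), one can write $\mathbf F^{e_\tau}|_{N_c} = p^c \cdot U_c$ for some $\sigma^{e_\tau}$-semilinear automorphism $U_c$ of $N_c$. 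The elementary divisors of $\mathbf F^{e_\tau}|_{N_c}$ are then all equal to $p^c$, so the Hodge polygon of $N_c$ is the straight line of slope $c$, coinciding with its Newton polygon.

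Summing the contributions from the isocline summands, the Hodge polygon of $(M_\tau, \mathbf F^{e_\tau})$ equals $\ord_{\frak o_\tau}(n, \frak f)$, as required.

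The main obstacle is the lattice-level isocline decomposition and, in particular, the assertion that $\mathbf F^{e_\tau}$ acts on each summand as $p^c$ times an automorphism. This is the rigid structural consequence of the $\mu$-ordinary hypothesis: beyond merely fixing the Newton polygon, it forces the Dieudonn\'e module into the standard shape in which each isocline piece is, up to a twist by $p^c$, \'etale. This is precisely the content of Moonen's structure theorem for the $\mu$-ordinary stratum, and, once granted, the remainder of the argument is immediate.
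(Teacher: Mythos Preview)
Your proposal is correct and follows essentially the same route as the paper, which simply cites the proof of \cite[1.3.7]{MoonenSerreTate}; you have unpacked that citation by invoking Moonen's explicit standard $\mu$-ordinary object (cf.\ Appendix~\ref{sec eo} of the present paper) and reading off the Hodge slopes directly. One minor remark: your opening paragraph on Mazur's inequality is superfluous, since your second paragraph computes the Hodge polygon outright rather than bounding it, and the crucial step---that on each isocline summand $\mathbf F^{e_\tau}$ is $p^c$ times an \emph{automorphism}, not merely an isogeny---is exactly what requires Moonen's structure theorem and is not a formal consequence of having an integral isoclinic decomposition.
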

In particular, under the assumption of being $\mu$-ordinary, the Hodge polygon of $M_{\tau}$ depends only on the orbit ${\frak o_{\tau}}$.
\begin{proof} This follows from the proof of \cite[1.3.7]{MoonenSerreTate}.
\end{proof}

Suppose $A$ is $\mu$-ordinary. Combining Lemmas~\ref{lemma orbits mu ordinary} and~\ref{lemma hodge newton polygon}, both the Newton polygon and the Hodge polygon of $M_{\tau}$ are equal to $\ord_{\frak o_{\tau}}(n,\frak f)$. By the Hodge-Newton decomposition \cite[1.6.1]{KatzSlopeFiltration}, one can write\begin{equation} M_{\tau}=\bigoplus_{j=0}^{e_{\tau}} M_{\tau}^{[j]}, \end{equation} where $M_{\tau}^{[j]}$ is an isoclinic subcrystal of slope $j$.
\begin{thm} One has \begin{equation} \fil^1_{\tau_i}=\bigoplus_{j\geq i}\overline{M_{\tau_i}^{[j]}} \label{eq fil1 slope decomp} .\end{equation}\label{th fil1 slope decomp} \end{thm}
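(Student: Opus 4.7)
The plan is to identify $\fil^1_{\tau_i}$ Dieudonn\'e-theoretically with a kernel of Frobenius modulo $p$, apply the canonical splitting furnished by Hodge-Newton, and then analyze how the individual $F$-steps distribute over the slope pieces.

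By the standard Dieudonn\'e-theoretic description, $\fil^1_{\tau_i} = VM_{\tau_{i+1}}/pM_{\tau_i}$ inside $\overline{M_{\tau_i}}$. Combining this with the identity $VM_{\tau_{i+1}} = \{x \in M_{\tau_i} : Fx \in pM_{\tau_{i+1}}\}$ (which follows from $FV=p$ together with the $p$-torsion-freeness of $M$), one obtains
\[
\fil^1_{\tau_i} \;=\; \ker\!\bigl(\overline{F}\colon \overline{M_{\tau_i}} \longrightarrow \overline{M_{\tau_{i+1}}}\bigr),
\]
where $\overline{F}$ denotes the reduction of $F$ modulo $p$. The task thus reduces to identifying this kernel with $\bigoplus_{j\geq i}\overline{M_{\tau_i}^{[j]}}$.

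Under the $\mu$-ordinary assumption, Lemmas~\ref{lemma orbits mu ordinary} and~\ref{lemma hodge newton polygon} give Newton polygon $=$ Hodge polygon for $(M_{\tau_i}, F^{e_\tau})$, so Katz's Hodge-Newton theorem produces the canonical decomposition $M_{\tau_i} = \bigoplus_j M_{\tau_i}^{[j]}$ into isoclinic sub-$F^{e_\tau}$-crystals of slope $j$. The canonical nature of this splitting forces $F$ to respect it along the Frobenius orbit: $F(M_{\tau_i}^{[j]}) \subset M_{\tau_{i+1}}^{[j]}$. Hence $\ker\overline{F} = \bigoplus_j \ker\bigl(\overline{F}|_{\overline{M_{\tau_i}^{[j]}}}\bigr)$.

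The decisive step is to show that at each position $i$, the restricted map $F\colon M_{\tau_i}^{[j]}\to M_{\tau_{i+1}}^{[j]}$ is of pure Hodge slope $1$ (i.e., equals $p$ times an isomorphism of $W(k)$-modules) exactly when $j\geq i$, and is of pure Hodge slope $0$ (an isomorphism) when $j<i$. Granted this, $\ker\bigl(\overline{F}|_{\overline{M_{\tau_i}^{[j]}}}\bigr)$ equals all of $\overline{M_{\tau_i}^{[j]}}$ for $j\geq i$ and is zero otherwise, yielding \eqref{eq fil1 slope decomp}. The pattern is pinned down uniquely by combining the following constraints: the total Hodge valuation of $F$ at step $i$ equals $\frak{f}(\tau_i)$; the total valuation of $F^{e_\tau}$ on $M^{[j]}$ equals $j\cdot\rank M^{[j]}$; the ordering $\frak{f}(\tau_1)\geq\cdots\geq\frak{f}(\tau_{e_\tau})$; and the dimension identity $\sum_{j\geq i}\rank M^{[j]} = \frak{f}(\tau_i)$ obtained by unpacking the definition \eqref{eq def slope ord} of the slopes $a_j$.

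The main obstacle is this step-by-step purity claim: Hodge-Newton guarantees isoclinic behavior only for the composition $F^{e_\tau}$, whereas the assertion concerns each of the $e_\tau$ intermediate Frobenius stages. One way to handle it is to invoke the explicit normal form for the $\mu$-ordinary Dieudonn\'e module due to Moonen \cite{MoonenSerreTate} and Wedhorn \cite{Wed}, in which the individual $F$-action is visible by inspection. Alternatively, one argues by a Mazur-inequality-type extremality, observing that the equality Hodge polygon $=$ Newton polygon for $F^{e_\tau}$ leaves no slack for mixed-slope contributions at intermediate steps and so forces each restriction $F|_{M_{\tau_i}^{[j]}}$ to be pure.
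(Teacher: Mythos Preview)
Your proposal is correct in substance but follows a genuinely different route from the paper's main-text proof.

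The paper never establishes your step-by-step purity claim. After reducing (by a dimension count) to the inclusion $\fil^1_{\tau_i}\supset\bigoplus_{j\geq i}\overline{M_{\tau_i}^{[j]}}$, it runs a downward induction on the slope parameter $j$ (Lemmas~\ref{lemma base fil1 slopes} and~\ref{lemma induction fil1 slopes}). The engine is a repeated application of Mazur's theorem in the form of Corollaries~\ref{cor h1 crys} and~\ref{cor h1 crys 2}: if $\mathbf F^k(x)\in p^kM$ then $\bar x\in\fil^1$. At each inductive step one uses the slope pieces already known to lie in $\fil^1$ to squeeze out one extra factor of $p$, until enough divisibility is accumulated to invoke the corollary. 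No assertion is ever made about individual Frobenius steps being pure on slope pieces.

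Your route~(a), via Moonen's explicit normal form for the $\mu$-ordinary Dieudonn\'e module, is precisely the alternative argument the paper gives in Appendix~\ref{sec eo}. Your route~(b) is not in the paper and is more interesting than you give it credit for. The extremality does work, but it is a genuine combinatorial lemma rather than an observation: writing $s_i^j\in\{0,1,\ldots,r_j\}$ for the number of elementary divisors equal to $p$ in $F\vert_{M_{\tau_i}^{[j]}}$ (this is well-defined because $M_{\tau_i}^{[j]}$ is a direct summand and the elementary divisors of $F$ on $M_{\tau_i}$ lie in $\{1,p\}$), the constraints $\sum_j s_i^j=\frak f(\tau_i)=\sum_{j\geq i}r_j$, $\sum_i s_i^j=jr_j$, and $0\leq s_i^j\leq r_j$ do force $s_i^j=r_j$ for $j\geq i$ and $s_i^j=0$ otherwise; but one has to sum the first identity over $i\leq i_0$ and compare against $\sum_{i\leq i_0}s_i^j\leq\min(i_0,j)\,r_j$ to see it. ``Leaves no slack'' is the right intuition, but you should write out this step.

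Two further remarks. First, a notational collision: throughout you write $\tau_{i+1}$ for $\sigma\tau_i$, but in the paper's conventions the indices $\tau_1,\ldots,\tau_{e_\tau}$ are ordered by \emph{decreasing} value of $\frak f$, not cyclically by Frobenius; in general $\sigma\tau_i\neq\tau_{i+1}$, and your purity claim ``pure of slope~$1$ exactly when $j\geq i$'' is only correct under the paper's ordering. Second, the claim that $F$ carries $M_{\tau}^{[j]}$ into $M_{\sigma\tau}^{[j]}$ is right but deserves one line: $F\colon(M_{\tau},\mathbf F^{e_\tau})\to(M_{\sigma\tau},\mathbf F^{e_\tau})$ is an isogeny of $\mathbf F^{e_\tau}$-crystals, so Lemma~\ref{lemma isogeny preserves slopes} applies.
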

\begin{proof} By the explicit description of $\ord_{\frak o_{\tau}}(n,\frak f)$, we see that the two sides of~\eqref{eq fil1 slope decomp} have the same dimension. Hence it suffices to show the inclusion \begin{equation} \fil^1_{\tau_i} \supset \bigoplus_{j\geq i}\overline{M_{\tau_i}^{[j]}}. \end{equation}
To this effect, our main tool will be Mazur's theorem which we recall now.
\begin{thm}[Mazur] Let $A$ be an abelian variety over an algebraically closed field $k$ of characteristic $p$. Denote by $\overline{\ \square \ }$ the reduction modulo $p$ of $\square$. Then for all $j, m \in \mathbf Z_{\geq 0}$, one has \begin{equation} \fil^j H^m_{dR}(A)=\overline{\mathbf F^{-1} (p^jH^m_{\crys}(A))}, \label{eq frobenius determines the hodge filtration} \end{equation} where $\mathbf F$ is the canonical lifting of Frobenius on crystalline cohomology.\label{th frobenius determines the hodge filtration} \end{thm}
\begin{proof} Let $A$ be an abelian variety over an algebraically closed field of characteristic 0. Since the Hodge-de Rham spectral sequence of $A$ degenerates at $E_1$ and since the crystalline cohomology of $A$ torsion-free, the theorem is a special case of \cite[8.26]{BerOgu}. \end{proof}

\Th~\ref{th fil1 slope decomp} will now be proved as follows: Lemma~\ref{lemma isogeny preserves slopes}-\Cor~\ref{cor h1 crys 2} are of a preparatory nature. The crux of the proof of \Th~\ref{th fil1 slope decomp} is contained in Lemmas~\ref{lemma base fil1 slopes} and~\ref{lemma induction fil1 slopes}.

\begin{lem} \label{isogenies-respect-decomposition} Suppose, for $i \in \{1,2\}$, that $(M_i,\mathbf F_i)$ is a $W(k)$-module which is an ordinary $\mathbf F_i$-crystal i.\ e., the Hodge and Newton polygons of $(M_i,\mathbf F_i)$ coincide. Let $\varphi:(M_1, \mathbf F_1) \rightarrow (M_2, \mathbf F_2)$ be an isogeny, so in particular the Newton polygon of $(M_1,\mathbf F_1)$ is the same as that of $(M_2,\mathbf F_2)$. Let $0\leq \lambda_1< \cdots <\lambda_s$ be the slopes of $(M_i,\mathbf F_i)$ with multiplicities $m_1, \ldots, m_s$. Let \begin{equation} M_i=\bigoplus_{j=1}^sM_{i,j} \end{equation} be the Newton-Hodge decomposition {\rm \cite[1.6.1]{KatzSlopeFiltration}} applied to $(M_i,\mathbf F_i)$ so that $(M_{i,j},\mathbf F_i)$ is an isoclinic subcrystal of rank $m_j$ and slope $\lambda_j$. Then $\varphi (M_{1,j}) \subset M_{2,j}$ \label{lemma isogeny preserves slopes} \end{lem}
\begin{proof} Since $\varphi$ is an isogeny and $M_{1,j}$ is a $\mathbf F_1$-subcrystal of $M_1$, the image $\varphi(M_{1,j})$ is an $\mathbf F_2$-subcrystal of $M_2$. Since the Newton polygon is invariant under isogeny, $(\varphi(M_{1,j}),\mathbf F_2)$ is isoclinic of slope $\lambda_j$ with multiplicity $m_j$. Let $M'$ denote the $\mathbf F_2$ subcrystal of $M_2$ generated by $\varphi(M_{1,j})$ and $M_{2,j}$. Then $M'$ is isoclinic of slope $\lambda_j$, so the rank of $M'$ is $m_j$. Since $M_2/M_{2,j}$ is free, we conclude that $M'=M_{2,j}$. Therefore $\varphi(M_{1,j}) \subset M_{2,j}$.
\end{proof}

\begin{remark}
{\rm Lemma \ref{isogenies-respect-decomposition} also follows more generally from the fact that homomorphisms of $F$-crystals respect the slope decomposition, see \cite[Property e), p. 81]{Dem}.}

\end{remark}

\begin{lem} Let $M=H^1_{\crys}(A)$, where $A$ is an abelian variety. Suppose $k \in \mathbf Z_{\geq 2}$, $ x \in M$ and $\mathbf F(x) \in p^kM$. Then $x \in p^{k-1}M$. \label{lemma h1 crys} \end{lem}
\begin{proof} Since $\mathbf F(x) \in p^kM$, Mazur's theorem entails that $\bar{x} \in \fil^k\bar{M}$. But $k \geq 2$, so $\fil^k\bar{M}=\{0\}$. Hence $\bar{x}=0$, so $x \in pM$. If $k=2$ we are done. So assume $k>2$ and write $x=py$, for some $y \in M$. Then $\mathbf F(x)=\mathbf F(py)=p\mathbf F(y)$ and $\mathbf F(x) \in p^kM$ implies $\mathbf F(y) \in p^{k-1} M$. By induction on $k$, one has $y \in p^{k-2}M$, whence $x \in p^{k-1}M$. \end{proof}
\begin{cor} Let $M=H^1_{\crys}(A)$. Suppose $j ,k \in \mathbf Z_{\geq 2}$, $x \in M$ and $\mathbf F^j(x) \in p^kM$. Then $\mathbf F^{j-1}(x) \in p^{k-1}M$. \label{cor h1 crys}\end{cor}
\begin{proof} Write $\mathbf F^j(x)=\mathbf F(\mathbf F^{j-1}(x))$ and apply Lemma~\ref{lemma h1 crys}.
\end{proof}
\begin{cor} Let $M=H^1_{\crys}(A)$. Suppose $k \in \mathbf Z_{\geq 1}$, $x \in M$ and $\mathbf F^k(x) \in p^kM$. Then $\bar x \in \fil^1\bar M$. \label{cor h1 crys 2}\end{cor}
\begin{proof} Applying \Cor~\ref{cor h1 crys} repeatedly $k-1$ times  gives $\mathbf F(x) \in pM$. Then the conclusion follows from Mazur's theorem. \end{proof}
\begin{lem} The following inclusion holds: \begin{equation} \fil^1_{\tau_i}\supset \overline{ M_{\tau_i}^{[e_{\tau} ]}} \end{equation} \label{lemma base fil1 slopes}\end{lem}
\begin{proof} Since $M_{\tau_i}^{[e_{\tau}]}$ is isoclinic of slope $e_{\tau}$, we have $\mathbf F^{e_{\tau}}(M_{\tau_i}^{[e_{\tau}]})\subset p^{e_{\tau}}M_{\tau_i}^{[e_{\tau}]}$. Suppose $x \in M_{\tau_i}^{[e_{\tau}]}$. Then $\mathbf F^{e_{\tau}}(x) \in p^{e_{\tau}}M_{\tau_i}^{[e_{\tau}]}$, so the conclusion follows from \Cor~\ref{cor h1 crys 2}.
\end{proof}
\begin{lem} Let $\nu \in\{0,1,\ldots, e_{\tau}-1\}$. Then for all $i \leq e_{\tau}-\nu$, one has \begin{equation}  \fil^1_{\tau_i} \supset M_{\tau_i}^{[e_{\tau}-\nu]}  \label{eq induction fil1 slope decomp}\end{equation} \label{lemma induction fil1 slopes}\end{lem}
\begin{proof} The proof is by induction on $\nu$. The case $\nu=0$ is Lemma~\ref{lemma base fil1 slopes}. Suppose~\eqref{eq induction fil1 slope decomp} holds up to $\nu-1$. Then we have \begin{equation} \fil^1_{\tau_{e_{\tau}-\beta}}=\bigoplus_{j \geq e_{\tau}-\beta}\overline{ M_{\tau_{e_{\tau}-\beta}}^{[j]}} \end{equation} for all $ \beta \leq \nu-1$.

Consider the diagram \begin{equation} M_{\tau_i} \overset{\mathbf F}{\longrightarrow} M_{\sigma\tau_i}\overset{\mathbf F}{\longrightarrow} \cdots \overset{\mathbf F}{\longrightarrow} M_{\sigma^{e_{\tau}-1}\tau_i}\overset{\mathbf F}{\longrightarrow} M_{\tau_i}  \end{equation}
Let $t_1 \geq t_2 \geq \cdots \geq t_{\nu}$ such that for all $\alpha$, $1 \leq \alpha \leq \nu$, one has $\sigma^{t_{\alpha}}\tau_i=\tau_{j_\alpha}$ and $j_{\alpha}>e_{\tau}-\nu$. Let $x \in M_{\tau_i}^{e_{\tau}-\nu}$. Then $\mathbf F^{e_{\tau}}(x) \in p^{e_{\tau}-\nu}M_{\tau_i}$.
By \Cor~\ref{cor h1 crys 2}, we can subtract $e_{\tau} - t - 1$ from the exponents on both sides, thus obtaining:
\begin{equation} \mathbf F^{t_1+1}(x) \in p^{t_1+1-\nu} M_{\sigma^{t_1+1} \tau_i}. \end{equation}

Writing \begin{equation} {\mathbf F} \left( \frac{\mathbf F^{t_1}}{p^{t_1 - \nu}} (x)  \right) \in p M_{\sigma^{t_1+1}\tau_i}, \end{equation} we see by Mazur's theorem that
\begin{equation}     \frac{\mathbf F^{t_1}}{p^{t_1 - \nu}} (\overline{x})  \in \fil^1_{\sigma^{t_1} \tau_i} = \fil^1_{\tau_{j_1}} . \end{equation}

Since $j_1 > e_{\tau} - \nu$, by the induction hypothesis and equality of dimensions, we have
\begin{equation}
\fil^1_{\tau_{j_1}}=\bigoplus_{j \geq j_1}\overline{ M_{\tau_{j_1}}^{[j]}}.
\end{equation}
On the other hand, by assumption, $x \in M_{\tau_i}^{[e_{\tau} - \nu]}$. Since $\mathbf F^{t_1}/p^{t_1-\nu}$ is an isogeny, Lemma~\ref{lemma isogeny preserves slopes} implies that \begin{equation} \frac{\mathbf F^{t_1}}{p^{t_1-\nu}}(x) \in M_{\tau_{j_1}}^{e_{\tau} - \nu} .\end{equation} Hence \begin{equation}  \frac{\mathbf F^{t_1}}{p^{t_1-\nu}}(\bar x) \in \overline{M_{\tau_{j_1}}^{e_{\tau} - \nu}} \cap \bigoplus_{j \geq j_1}\overline{ M_{\tau_{j_1}}^{[j]}}=\{0\} .\end{equation} Therefore $\mathbf F^{t_1}(x) \in p^{t_1-\nu+1}M_{\tau_{j_1}}$.

Repeating the same argument with $t_2$ we obtain $\mathbf F^{t_2}(x) \in p^{t_2-\nu+2}M_{\tau_{j_2}}$. Continuing in this way we finally arrive at $\mathbf F^{t_{\nu}}(x) \in p^{t_{\nu}}M_{\tau_{j_{\nu}}}$ and one last application of \Cor~\ref{cor h1 crys 2} yields $\bar{x} \in \fil^1_{\tau_i}$.

\end{proof}
Lemma~\ref{lemma induction fil1 slopes} completes the proof of \Th~\ref{th fil1 slope decomp}
\end{proof}

\section{The generalized Hasse invariants} \label{sec generalized hasse invariants}

Based on the results and notation of the previous section, we are in position to define the desired generalized Hasse invariants.

Let $\mathcal A \rightarrow sh$ be a representative of the universal isogeny class. The absolute Frobenius morphism \[\mathbf F: \mathcal  A \rightarrow \mathcal A  \] induces a $\sigma$-linear map \begin{equation} \mathbf F: H^1_{\crys}(\mathcal A)  \rightarrow H^1_{\crys}(\mathcal A). \end{equation}

As we have seen in \S\ref{sec f crystals hodge filtration}, this map permutes non-trivially the factors indexed by the embeddings $\tau$. This permutation can be decomposed into cycles according to the orbits $\frak o_{\tau}$. Consider such an orbit $\frak o_{\tau} = \{ \tau_1, \dots, \tau_{e_{\tau}} \}$. Let $ \Grad ^0_{\tau_i}= H^1_{dR}(\calA)_{\tau_i}/\fil^1_{\tau_i}$. Set $d_i=\dim \Grad ^0_{\tau_i}$ and $c_i=(i-1)d_i-(d_1+\cdots +d_{i-1})$.
\begin{lem} The map \begin{equation}\bigwedge^{d_i} \mathbf F^{e_{\tau}}:H^{d_i}_{\crys}(\mathcal A)_{\tau_i} \rightarrow H^{d_i}_{\crys}(\mathcal A)_{\tau_i} \end{equation} is divisible by $p^{c_i}$. \label{lemma hd crys divisibility} \end{lem}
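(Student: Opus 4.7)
The plan is to reduce the asserted divisibility on $sh$ to a pointwise statement at geometric points, to analyze Frobenius one step at a time along the cycle $\tau_i\mapsto\sigma\tau_i\mapsto\cdots\mapsto\sigma^{e_\tau-1}\tau_i\mapsto\tau_i$, and finally to telescope the stepwise divisibilities into $c_i$. Since the crystal $H^1_\crys(\calA)_{\tau_j}$ and its exterior powers are locally free and the decomposition by embeddings is compatible with base change, it suffices to verify divisibility at each geometric point $\bar s\to sh$; fix such a point, let $k$ denote its residue field, and work with the Dieudonn\'e module $M=H^1_\crys(\calA_{\bar s}/W(k))$ and its graded pieces $M_{\tau}$.

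The key input, valid at every point of $sh$ and not merely on the $\mu$-ordinary locus, is the Smith normal form of a single Frobenius step $\mathbf F\colon M_{\tau_j}\to M_{\sigma\tau_j}$. By Mazur's \Th~\ref{th frobenius determines the hodge filtration}, the kernel of the reduction $\bar{\mathbf F}$ on $M_{\tau_j}/p$ equals $\fil^1_{\tau_j}$, which has dimension $\mathfrak{f}(\tau_j)=n-d_j$ by the signature condition; hence $d_j$ of the elementary divisors of $\mathbf F$ are units. On the other hand, the existence of a Verschiebung with $\mathbf V\mathbf F=p$ forces every elementary divisor of $\mathbf F$ to divide $p$. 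Combining the two observations, $\mathbf F$ takes Smith normal form $\diag(1,\dots,1,p,\dots,p)$ with exactly $d_j$ ones and $\mathfrak{f}(\tau_j)$ copies of $p$.

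Reading off the $d_i$-th exterior power of this diagonal form, $\bigwedge^{d_i}\mathbf F\colon\bigwedge^{d_i}M_{\tau_j}\to\bigwedge^{d_i}M_{\sigma\tau_j}$ is divisible by $p^{\max(0,\,d_i-d_j)}$, since choosing $d_i$ basis vectors forces picking at least $\max(0,\,d_i-d_j)$ from the $p$-block. A composition of maps divisible by $p^{a}$ and $p^{b}$ is divisible by $p^{a+b}$, so iterating around the orbit gives divisibility of $\bigwedge^{d_i}\mathbf F^{e_\tau}$ by $p^N$ with
\[ N \;=\; \sum_{s=0}^{e_\tau-1}\max\bigl(0,\,d_i-d_{\sigma^s\tau_i}\bigr) \;=\; \sum_{j=1}^{e_\tau}\max(0,\,d_i-d_j), \]
using that $\sigma$ permutes the orbit. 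The convention $\mathfrak{f}(\tau_1)\geq\cdots\geq\mathfrak{f}(\tau_{e_\tau})$, equivalently $d_1\leq\cdots\leq d_{e_\tau}$, makes the $\max$ vanish for $j>i$, so the sum collapses to
\[ \sum_{j=1}^{i}(d_i-d_j) \;=\; i\,d_i - \sum_{j=1}^{i}d_j \;=\; (i-1)d_i - \sum_{j=1}^{i-1}d_j \;=\; c_i. \]
The only real obstacle is pinning down the Smith normal form of a single Frobenius step; once Mazur's theorem and Verschiebung are combined, everything else is bookkeeping.
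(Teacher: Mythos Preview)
Your proof is correct and takes a genuinely different route from the paper's. The paper restricts attention to $\mu$-ordinary geometric points, invoking Wedhorn's density theorem \cite[\Th 1.6.2]{Wed} together with a reference to de Jong \cite{de-jong-crystalline} to justify checking divisibility on a dense open; at such points it uses Lemma~\ref{lemma hodge newton polygon} to identify the Hodge polygon of $(M_{\tau_i},\mathbf F^{e_\tau})$ with $\ord_{\frak o_\tau}(n,\frak f)$, reads off the smallest slope of $\bigwedge^{d_i}M_{\tau_i}$ as $c_i$, and appeals to \cite[1.2.1]{KatzSlopeFiltration}. By contrast, you work at an arbitrary geometric point and factor $\mathbf F^{e_\tau}$ into single Frobenius steps $M_{\tau_j}\to M_{\sigma\tau_j}$, determining each step's elementary divisors directly from Mazur's theorem and the relation $\mathbf F\mathbf V=p$. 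This is more self-contained: it avoids both the density of the $\mu$-ordinary locus and Moonen's Hodge-polygon computation, and in effect shows that the Hodge polygon of $(M_{\tau_i},\mathbf F^{e_\tau})$ lies on or above $\ord_{\frak o_\tau}(n,\frak f)$ at \emph{every} point, not just generically. The paper's route fits more naturally into its systematic Newton--Hodge framework, but yours is arguably cleaner for this lemma in isolation. One small remark: the reduction from divisibility of a map of crystals over $sh$ to divisibility at all geometric points still rests on the smoothness of $sh$ and the local freeness of the crystal, exactly the input the paper extracts from de Jong; you might make that dependence explicit.
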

\begin{proof} Since the $\mu$-ordinary locus $sh^{\mu-\ord}$ is open and dense \cite[\Th 1.6.2]{Wed}, it suffices to prove the divisibility for every $\mu$-ordinary geometric point $A$. (We thank David Geraghty for pointing out to us that this follows from \cite{de-jong-crystalline}, specifically remarks in \S\S1.1-1.2 and \S2.3.4 of \loccit, using the fact that our Shimura variety $sh$ is smooth over a field.) By Lemma~\ref{lemma hodge newton polygon} we know that the Hodge polygon of $M_{\tau_i}$ is $\ord_{\frak o_{\tau}}(n, \frak f)$.  Since the smallest slope of the Hodge polygon of $\bigwedge^{d_i}M_{\tau_i}$ is the sum of the $d_i$ smallest slopes of the Hodge polygon of $M_{\tau_i}$, the smallest slope of the Hodge polygon of $\bigwedge^{d_i}M_{\tau_i}$ is \begin{equation} \sum_{j=1}^{i-1} j(d_{j+1}-d_{j})=c_i, \end{equation} so the lemma follows from \cite[1.2.1]{KatzSlopeFiltration}.  \end{proof}

\begin{lem} \label{restrictionToFil} The restriction of the map \begin{equation} \frac{\bigwedge^{d_i}\mathbf F^{e_{\tau}}}{p^{c_i}}:H^{d_i}_{dR}(\mathcal A)_{\tau_i} \rightarrow H^{d_i}_{dR}(\mathcal A)_{\tau_i} \end{equation} to $\fil^1H^{d_i}_{dR}(\mathcal A)_{\tau_i}$ is zero. \label{lemma fil1 zero} \end{lem}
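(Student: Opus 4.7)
The plan is to reduce to $\mu$-ordinary geometric points and then carry out a combinatorial analysis via the slope decomposition. As in the proof of Lemma~\ref{lemma hd crys divisibility}, since the $\mu$-ordinary locus is open and dense in $sh$ and $sh$ is smooth, it suffices to verify the asserted vanishing at each $\mu$-ordinary geometric point. At such a point, I combine the Hodge--Newton decomposition $M_{\tau_i} = \bigoplus_{j=0}^{e_{\tau}} M_{\tau_i}^{[j]}$ with \Th~\ref{th fil1 slope decomp}, which gives $\fil^1_{\tau_i} = \overline{F^{\geq i} M_{\tau_i}}$, where $F^{\geq i} M_{\tau_i} := \bigoplus_{j \geq i} M_{\tau_i}^{[j]}$. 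A lift to $\bigwedge^{d_i} M_{\tau_i}$ of an element of $\fil^1 H^{d_i}_{dR}(\mathcal{A})_{\tau_i}$ can then be written as a sum of decomposable wedges $v_1 \wedge \cdots \wedge v_{d_i}$ in which at least one factor lies in $F^{\geq i} M_{\tau_i}$ and the others in $M_{\tau_i}$; any two such lifts differ by an element of $p \bigwedge^{d_i} M_{\tau_i}$, which contributes nothing after dividing by $p^{c_i}$ and reducing modulo $p$.

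Next, since each $M_{\tau_i}^{[j]}$ is isoclinic of integer slope $j$ and its Hodge and Newton polygons coincide, $\mathbf{F}^{e_{\tau}}$ acts on it as $p^j$ times a $\sigma^{e_{\tau}}$-linear automorphism. Decomposing $\bigwedge^{d_i} M_{\tau_i}$ into summands $\bigwedge^{a_0} M_{\tau_i}^{[0]} \otimes \cdots \otimes \bigwedge^{a_{e_{\tau}}} M_{\tau_i}^{[e_{\tau}]}$ indexed by tuples $(a_0, \ldots, a_{e_{\tau}})$ with $\sum_j a_j = d_i$ and $a_j \leq m_j := \rank M_{\tau_i}^{[j]}$, the restriction of $\bigwedge^{d_i}\mathbf{F}^{e_{\tau}}$ to such a summand factors as $p^{\sum_j j a_j}$ times a $\sigma^{e_{\tau}}$-linear automorphism.

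The crux is then the combinatorial claim: if some $a_j \geq 1$ with $j \geq i$, then $\sum_j j a_j \geq c_i + 1$. Comparing ranks in \Th~\ref{th fil1 slope decomp} yields the identities $d_i = m_0 + m_1 + \cdots + m_{i-1}$ and $c_i = \sum_{j < i} j \, m_j$, so the unconstrained minimum of $\sum_j j a_j$ subject to $\sum_j a_j = d_i$ and $a_j \leq m_j$ equals $c_i$, uniquely attained by saturating the low slopes $j < i$. Forcing at least one unit of mass into a slope $\geq i$ then requires vacating at least one unit from some slope $j' \leq i-1$, raising the weighted sum by at least $i - (i-1) = 1$. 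Consequently $\bigwedge^{d_i}\mathbf{F}^{e_{\tau}}$ maps the $\fil^1$-lift into $p^{c_i + 1} \bigwedge^{d_i} M_{\tau_i}$, and after dividing by $p^{c_i}$ the image lies in $p \bigwedge^{d_i} M_{\tau_i}$, which is zero modulo $p$.

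The main technical obstacle is to secure the integer-slope Hodge--Newton decomposition at the integral level (for which the equality of the Hodge and Newton polygons at $\mu$-ordinary points is essential) and to pin down $c_i$ as the exact minimum weighted sum in the combinatorial optimization; once these two ingredients are in hand, the rest is direct bookkeeping with wedge products and the slope decomposition.
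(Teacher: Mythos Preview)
Your proof is correct and follows essentially the same approach as the paper: reduce to $\mu$-ordinary geometric points by density, use the Hodge--Newton decomposition together with \Th~\ref{th fil1 slope decomp} to identify lifts of $\fil^1 H^{d_i}_{dR}(A)_{\tau_i}$ inside $\bigwedge^{d_i} M_{\tau_i}$, and then show that on each relevant summand $\bigwedge^{d_i}\mathbf F^{e_\tau}$ is divisible by $p^{c_i+1}$. The only cosmetic differences are that the paper groups the summands by the single integer $s = \sum_{j\geq i} a_j \geq 1$ rather than by the full tuple $(a_0,\ldots,a_{e_\tau})$, and that the paper phrases the divisibility as a smallest-slope estimate via \cite[1.2.1]{KatzSlopeFiltration} rather than your more explicit observation that $\mathbf F^{e_\tau}$ acts on $M_{\tau_i}^{[j]}$ as $p^j$ times an automorphism; the underlying combinatorics (your optimization argument identifying $c_i=\sum_{j<i} j\,m_j$ as the unique minimum) is exactly what drives both versions.
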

\begin{proof} Again, because the $\mu$-ordinary locus is open and dense \cite[\Th 1.6.2]{Wed},  it suffices to prove the vanishing for every $\mu$-ordinary geometric point $A$. Let \begin{equation}W_{\tau_i}=\bigoplus_{j<i}M_{\tau_i}^{[j]} \end{equation} By \Th~\ref{th fil1 slope decomp}, we have a decomposition \begin{equation} \overline{ M_{\tau_i}}=\fil^1_{\tau_i} \oplus \overline{W_{\tau_i}} \end{equation}Thus \begin{equation} \fil^1H^{d_i}_{dR}(A)_{\tau_i}= \bigoplus_{s=1}^{d_i} \left(\bigwedge^s\fil^1_{\tau_i} \otimes \bigwedge^{d_i-s} \overline{W_{\tau_i}}\right)\end{equation}
Therefore Lemma~\ref{lemma fil1 zero} is equivalent to showing that the restriction of $(\bigwedge^{d_i} \mathbf F^{e_{\tau}})/p^{c_i})$ to $\bigwedge^s\fil^1_{\tau_i} \otimes \bigwedge^{d_i-s}\overline{W_{\tau_i}}$ is zero for all $s \geq 1$. So fix $s$ and let $x\in \bigwedge^s\fil^1_{\tau_i} \otimes \bigwedge^{d_i-s}\overline{W_{\tau_i}}$.

Let \begin{equation} M' = \bigwedge^s\left(\bigoplus_{j \geq i}M_{\tau_i}^{[j]}\right) \otimes \bigwedge^{d_i-s} \left(\bigoplus_{j <i}M_{\tau_i}^{[j]}\right).\end{equation}
By \Th~\ref{th fil1 slope decomp}, there exists a lift $\tilde x$ of $x$ to $H^{d_i}_{\crys}(A)_{\tau_i}$ which lies in $M'$.
The smallest slope $\lambda_{\rm min}$ of the crystal $(M',  (\bigwedge^{d_i} \mathbf F^{e_{\tau}}))$ is, by definition, the sum of the $s$ smallest slopes of $\bigoplus_{j \geq i}M_{\tau_i}^{[j]}$ plus the sum of the $d_i - s $ smallest slopes of $\bigoplus_{j <i}M_{\tau_i}^{[j]}$. Since $s \geq 1$, $\lambda_{\rm min}$ is strictly bigger than the sum of the $d_i$ smallest slopes of $M_{\tau_i}$, and the latter is precisely $c_i$ by definition. Thus, by \cite[1.2.1]{KatzSlopeFiltration}, $(\bigwedge^{d_i} \mathbf F^{e_{\tau}})(\tilde{x}) \in p^{\lambda_{\rm min}} M_{\tau_i}$ and therefore $(\bigwedge^{d_i} \mathbf F^{e_{\tau}}/p^{c_i})(\tilde{x}) \in p M_{\tau_i}$.

\end{proof}

By Lemma~\ref{lemma fil1 zero}, we get an induced map \begin{equation} \frac{\bigwedge^{d_i}F^{e_{\tau}}}{p^{c_i}}:\Grad^0 H^{d_i}_{dR}(\mathcal A)_{\tau_i} \rightarrow \Grad^0 H^{d_i}_{dR}(\mathcal A)_{\tau_i} \end{equation}
Since $\Grad^0 H^{d_i}_{dR}(\mathcal A)_{\tau_i}\cong \omega_{\tau_i}^{\vee}$, we obtain a section \begin{equation}^{\tau_i} \! H \in H^0(sh, \omega_{\tau_i}^{p^{e_{\tau}}-1}). \end{equation}

\begin{dfn}
The section $^{\tau_i}\!H$  is called the $\tau_i$-Hasse invariant of $sh$.
\end{dfn}
\noindent
As in \cite[\Th 4.2.1]{WushiGalrepshldsI}, the $\tau_i$-Hasse invariant is compatible with isogenies in the sense that if $\varphi: \mathcal A \rightarrow \mathcal B$ is an isogeny preserving the $\mathcal U\ul$-structure then $\varphi^*(^{\tau_i}\!H(\mathcal B))=\ \! ^{\tau_i}\!H(\mathcal A)$. Therefore the $\tau_i$-Hasse invariant is well-defined.

\begin{remark} Compatibility with {\rm \cite{GoldringNicole}: If $F^+ = \mathbf Q$, then $^{\tau_1}\!H$ is equal to the $\mu$-Hasse invariant of \loccit \hspace{.01in} (see Appendix B).}
\end{remark}

We are now in a position to define the $\mu$-ordinary Hasse invariant in complete generality for unitary Shimura varieties.

\begin{dfn} Let $m = \underset{\tau \in \mathcal T}{\lcm} \{p^{e_\tau}-1 \}$, and let $m_\tau = m / (p^{e_{\tau}}-1)$. We define the $\mu$-ordinary Hasse invariant $\Ha$ as the product:
\begin{equation}
\Ha = \prod_{\tau \in \mathcal T} \ (^{\tau}\!H)^{m_\tau} \in H^0\big(sh, \omega^{m} \big)
\end{equation}
 \label{def exponent m} \end{dfn}

\section{The non-vanishing loci of the Hasse invariants}

We will describe the non-vanishing locus of the $\tau$-Hasse invariant one embedding $\tau$ at a time. In the end, the non-vanishing locus of the $\mu$-ordinary Hasse invariant will easily be read off as the $\mu$-ordinary locus.

\begin{thm}
Let $A$ be a geometric point of the special fiber $sh_{K^{(p)},\gerp}$. Then \[ ^{\tau_i}H (A) \neq 0 \] if and only if the Newton polygon of $M_{\tau_i}$ meets $\ord_{\frak o_{\tau_i}}(n,\frak f)$ at $(d_i,c_i)$ in the notation of \S\ref{sec generalized hasse invariants}.

\label{th vanishing tau ha}\end{thm}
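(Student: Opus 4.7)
The plan is to identify $^{\tau_i}H(A) \neq 0$ with an exact $p$-adic divisibility statement for $\bigwedge^{d_i}\mathbf{F}^{e_\tau}$, and match it with the Newton polygon condition at $(d_i, c_i)$ via Katz's slope theorems. By the construction in~\S\ref{sec generalized hasse invariants}, $^{\tau_i}H(A)$ is the induced $\sigma^{e_\tau}$-linear endomorphism $\bigwedge^{d_i}\mathbf{F}^{e_\tau}/p^{c_i}$ of the one-dimensional line $\Grad^0 H^{d_i}_{dR}(A)_{\tau_i}$, so its non-vanishing is equivalent to $\bigwedge^{d_i}\mathbf{F}^{e_\tau}$ being divisible by exactly $p^{c_i}$ (and not $p^{c_i+1}$) along a generator. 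The smallest Newton slope of $\bigwedge^{d_i}M_{\tau_i}$ equals the sum of the $d_i$ smallest Newton slopes of $M_{\tau_i}$, i.e., the value of the Newton polygon at height $d_i$; if this value strictly exceeds $c_i$, then Katz~\cite[1.2.1]{KatzSlopeFiltration} gives that $\bigwedge^{d_i}\mathbf{F}^{e_\tau}$ is divisible by $p^{c_i+1}$, so $\bigwedge^{d_i}\mathbf{F}^{e_\tau}/p^{c_i}$ vanishes modulo $p$, and hence $^{\tau_i}H(A)=0$. The contrapositive yields the forward direction.

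\textbf{Reverse direction: Newton--Hodge decomposition.} Conversely, suppose the Newton polygon of $M_{\tau_i}$ meets $\ord_{\mathfrak{o}_{\tau_i}}(n,\mathfrak{f})$ at $(d_i, c_i)$. Treat first the generic case $d_{i-1} < d_i < d_{i+1}$, in which $(d_i, c_i)$ is a vertex of $\ord_{\mathfrak{o}_{\tau_i}}(n,\mathfrak{f})$ (edge cases with repeated $d$'s reduce to this, since $(d_{i-1}, c_{i-1}) = (d_i, c_i)$ when $d_{i-1} = d_i$). Since the Newton polygon lies above $\ord_{\mathfrak{o}_{\tau_i}}(n,\mathfrak{f})$ and agrees at $(d_i, c_i)$, a convexity argument forces the first $d_i$ Newton slopes to be strictly less than $i$ (summing to $c_i$) and the remaining ones to be at least $i$, so $(d_i, c_i)$ is also a vertex of the Newton polygon. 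The same property transfers to the Hodge polygon of $M_{\tau_i}$, which is sandwiched between $\ord_{\mathfrak{o}_{\tau_i}}(n,\mathfrak{f})$ and the Newton polygon. Katz's Newton--Hodge decomposition~\cite[1.6.1]{KatzSlopeFiltration} then yields a direct sum $M_{\tau_i} = M^< \oplus M^\geq$, where $M^<$ has rank $d_i$ with Newton and Hodge slopes $< i$, and $M^\geq$ has rank $n - d_i$ with Newton and Hodge slopes $\geq i$.

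\textbf{Main obstacle and conclusion.} The remaining key step --- and the main technical obstacle --- is to generalize Theorem~\ref{th fil1 slope decomp} to this setting, establishing $\fil^1_{\tau_i} = \overline{M^\geq_{\tau_i}}$. The plan is to mimic the proof of Theorem~\ref{th fil1 slope decomp}: using the fact that $\mathbf{F}^{e_\tau}(M^\geq_{\tau_j}) \subset p^i M^\geq$ (Hodge slopes $\geq i$) at every embedding $\tau_j$, iterate the Mazur-type divisibilities of Lemma~\ref{lemma h1 crys} and Corollaries~\ref{cor h1 crys},~\ref{cor h1 crys 2} around the Frobenius orbit (parallel to Lemmas~\ref{lemma base fil1 slopes} and~\ref{lemma induction fil1 slopes}) to obtain $\overline{M^\geq_{\tau_j}} \subset \fil^1_{\tau_j}$, with dimension equality upgrading to $\fil^1_{\tau_i} = \overline{M^\geq_{\tau_i}}$. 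Granted this, $\overline{M^<_{\tau_i}}$ complements $\fil^1_{\tau_i}$ in $\overline{M_{\tau_i}}$, so any $W(k)$-basis of $M^<_{\tau_i}$ lifts a basis of $\Grad^0_{\tau_i}$. Since $M^<$ is $\mathbf{F}^{e_\tau}$-stable, $\bigwedge^{d_i}\mathbf{F}^{e_\tau}$ acts on the top exterior $\bigwedge^{d_i} M^<_{\tau_i}$ as multiplication by $\det(\mathbf{F}^{e_\tau}|_{M^<_{\tau_i}}) = p^{c_i} u$ for a unit $u$; dividing by $p^{c_i}$ yields $u$, a nonzero element of $\Grad^0 H^{d_i}_{dR}(A)_{\tau_i}$, whence $^{\tau_i}H(A) \neq 0$.
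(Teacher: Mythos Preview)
Your approach diverges from the paper's in a way that creates real difficulties, and the forward direction as written contains a gap.

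\textbf{Forward direction.} You invoke \cite[1.2.1]{KatzSlopeFiltration} to conclude that if the Newton polygon value $g_i$ at abscissa $d_i$ strictly exceeds $c_i$, then $\bigwedge^{d_i}\mathbf F^{e_\tau}$ is divisible by $p^{c_i+1}$. But \cite[1.2.1]{KatzSlopeFiltration} relates $p$-divisibility of $\mathbf F$ to the smallest \emph{Hodge} slope, not the smallest Newton slope; and $g_i>c_i$ (a Newton statement) does not force the first Hodge slope of $\bigwedge^{d_i}M_{\tau_i}$ to be $\geq c_i+1$ (the Hodge polygon of $(M_{\tau_i},\mathbf F^{e_\tau})$ is not constant on $sh$ in general). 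The paper instead uses \cite[1.3.3]{KatzSlopeFiltration}: $g_i>c_i$ means the smallest Newton slope of $(\bigwedge^{d_i}M_{\tau_i},\bigwedge^{d_i}\mathbf F^{e_\tau}/p^{c_i})$ is positive, hence the mod-$p$ action is nilpotent. Then Lemma~\ref{restrictionToFil} (which kills $\fil^1$) together with the fact that $\Grad^0 H^{d_i}_{dR}(A)_{\tau_i}$ is a line shows that nilpotence is equivalent to $^{\tau_i}\!H(A)=0$.

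\textbf{Reverse direction.} Your plan is to redo the Newton--Hodge decomposition at the single breakpoint $(d_i,c_i)$ and then \emph{generalize} Theorem~\ref{th fil1 slope decomp} to identify $\fil^1_{\tau_i}$ with $\overline{M^{\geq}_{\tau_i}}$ at a non-$\mu$-ordinary point. You correctly flag this as the main obstacle, and indeed the inductive mechanism of Lemmas~\ref{lemma base fil1 slopes}--\ref{lemma induction fil1 slopes} uses the full slope decomposition at \emph{every} embedding in the orbit; under your weaker hypothesis (Newton meets $\ord$ only at $(d_i,c_i)$) the needed decompositions at the other $\tau_j$ are not available, so the sketch does not obviously go through. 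The paper bypasses all of this: Lemma~\ref{restrictionToFil} was proved globally on $sh$ (by checking on the dense $\mu$-ordinary locus), so it holds at \emph{every} geometric point. Using it, the mod-$p$ operator is nilpotent iff $^{\tau_i}\!H(A)=0$, and by \cite[1.3.3]{KatzSlopeFiltration} nilpotence is equivalent to $g_i>c_i$. Both directions fall out at once, with no need to revisit Theorem~\ref{th fil1 slope decomp}.

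In short, the key idea you are missing is that Lemma~\ref{restrictionToFil}, once established by density, is available at every point and reduces the theorem to the nilpotence criterion \cite[1.3.3]{KatzSlopeFiltration}.
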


\begin{proof}
By Rapoport-Richartz's version of Mazur's inequality (see \cite[Lem. 1.3.4]{MoonenSerreTate}), the Newton polygon of $M_{\tau_i}$ sits on or above the ordinary polygon $\ord_{\frak o_{\tau_i}}(n,\frak f)$. Let $(d_i,g_i)$ be the unique point on the Newton polygon of $M_{\tau_i}$ whose first coordinate is $d_i$. Since the point $(d_i,c_i)$ lies on the polygon $\ord_{\frak o_{\tau_i}}(n,\frak f)$, the point $(d_i, g_i)$ is on or above $(d_i,c_i)$, meaning that  $g_i \geq c_i$. The rational number $g_i$ is the sum of the first $d_i$ slopes of the Newton polygon of $M_{\tau_i}$, hence $g_i$ is the smallest slope of the Newton polygon of $\bigwedge^{d_i} M_{\tau_i}$. Therefore the smallest Newton slope of \[ \left(\bigwedge^{d_i} M_{\tau_i}, \frac{\bigwedge^{d_i} \mathbf F ^{e_{\tau}}}{p^{c_i}} \right) \] is $g_i - c_i$. By \cite[1.3.3]{KatzSlopeFiltration} the action of $\bigwedge^{d_i} \mathbf F ^{e_{\tau}}/p^{c_i}$ on $\overline{\bigwedge^{d_i}M_{\tau_i}}$ is nilpotent if and only if this smallest slope is positive i.e. if and only if $g_i>c_i$.

Since $\Grad^0 H^{d_i}_{dR}(A)_{\tau_i}$ is a line,  $\bigwedge^{d_i} \mathbf F ^{e_{\tau}}/p^{c_i}$ acts on it by a scalar, namely $^{\tau_i}\!H(A)$. By Lemma~\ref{lemma fil1 zero},  the action of $\bigwedge^{d_i} \mathbf F ^{e_{\tau}}/p^{c_i}$ on $\overline{\bigwedge^{d_i}M_{\tau_i}}$ is nilpotent if and only if $^{\tau_i}\!H(A)=0$.
\end{proof}
\begin{cor} Let $A$ be a geometric point of the special fiber $sh_{K^{(p)},\gerp}$. Then $\Ha (A) \neq 0$ if and only if $A$ is $\mu$-ordinary. \label{cor vanishing mu ha}\end{cor}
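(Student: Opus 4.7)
The plan is to combine the single-embedding vanishing criterion (Theorem~\ref{th vanishing tau ha}) with the orbit characterization of $\mu$-ordinarity (Lemma~\ref{lemma orbits mu ordinary}). Since $\Ha = \prod_{\tau \in \mathcal T}(^{\tau}\!H)^{m_\tau}$ is a product of sections of line bundles with strictly positive exponents $m_\tau$, we have $\Ha(A) \neq 0$ if and only if $^{\tau}\!H(A) \neq 0$ for every $\tau \in \mathcal T$. Applying Theorem~\ref{th vanishing tau ha} at each embedding $\tau_i$ in each Frobenius orbit $\mathfrak o_\tau$, this is equivalent to the Newton polygon $N_\tau$ of $M_\tau$ passing through each point $(d_i, c_i)$, $i = 1, \ldots, e_\tau$, for every orbit.

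\emph{Forward direction.} If $A$ is $\mu$-ordinary, Lemma~\ref{lemma orbits mu ordinary} yields $N_\tau = \ord_{\mathfrak o_\tau}(n, \mathfrak f)$ for every orbit. The computation at the end of the proof of Lemma~\ref{lemma hd crys divisibility} identifies $c_i = \sum_{j=1}^{i-1} j(d_{j+1}-d_j)$ with the height of $\ord_{\mathfrak o_\tau}(n, \mathfrak f)$ at abscissa $d_i$, so $(d_i, c_i)$ sits on $N_\tau$ for every $i$, and $\Ha(A) \neq 0$.

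\emph{Reverse direction.} Assume $\Ha(A) \neq 0$, so $N_\tau$ passes through each $(d_i, c_i)$. By the definition~\eqref{eq def slope ord} of the slopes $a_j$, the slope of $\ord_{\mathfrak o_\tau}(n, \mathfrak f)$ jumps between abscissae $j$ and $j+1$ precisely when some $\tau' \in \mathfrak o_\tau$ satisfies $\mathfrak f(\tau') = n - j$, i.e., when $j$ equals one of the $d_i$; hence every interior breakpoint of the $\mu$-ordinary polygon is of the form $(d_i, c_i)$. Both polygons share the endpoints $(0,0)$ and $(n, \sum_j a_j)$, determined by the $F$-crystal structure of $(M_\tau, \mathbf F^{e_\tau})$, and by the Rapoport-Richartz/Mazur inequality (already invoked in the proof of Theorem~\ref{th vanishing tau ha}), $N_\tau$ lies weakly above $\ord_{\mathfrak o_\tau}(n, \mathfrak f)$. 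On each linear segment of $\ord_{\mathfrak o_\tau}(n,\mathfrak f)$ joining consecutive breakpoints $(d_i, c_i)$ and $(d_{i+1}, c_{i+1})$, the convexity of $N_\tau$ together with agreement at both endpoints forces $N_\tau$ to coincide with $\ord_{\mathfrak o_\tau}(n, \mathfrak f)$ there; the extremal segments are handled identically using the shared endpoints. Hence $N_\tau = \ord_{\mathfrak o_\tau}(n,\mathfrak f)$ for every orbit, and Lemma~\ref{lemma orbits mu ordinary} concludes that $A$ is $\mu$-ordinary.

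The main obstacle is the combinatorial claim used in the reverse direction: one must verify that the points $(d_i,c_i)$, together with the shared endpoints, already determine the $\mu$-ordinary polygon among convex polygons sitting above it. Once this is settled, the rest is a routine assembly of the vanishing criterion of Theorem~\ref{th vanishing tau ha} and the orbit characterization of Lemma~\ref{lemma orbits mu ordinary}.
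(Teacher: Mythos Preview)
Your proof is correct and follows essentially the same route as the paper: reduce to the per-embedding criterion of Theorem~\ref{th vanishing tau ha}, identify the points $(d_i,c_i)$ with the breakpoints of $\ord_{\frak o_{\tau}}(n,\frak f)$, and then invoke Lemma~\ref{lemma orbits mu ordinary}. The only difference is that you spell out the convexity argument showing that a Newton polygon lying on or above $\ord_{\frak o_{\tau}}(n,\frak f)$ and meeting it at every breakpoint must coincide with it, whereas the paper asserts this step without further comment.
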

\begin{proof} By definition $\Ha (A) \neq 0$ if and only if $^{\tau}\! H(A)\neq 0$ for all $\tau \in \mathcal T$. By \Th~\ref{th vanishing tau ha}, for every orbit $\frak o_{\tau}$ we have that $^{\tau'}\! H(A) \neq 0$ for all $\tau' \in \frak o_{\tau}$ if and only if the Newton polygon of $M_{\tau}$ meets $\ord_{\frak o_{\tau}}(n,\frak f)$ at every breakpoint of $\ord_{\frak o_{\tau}}(n, \frak f)$, so  $^{\tau'}\! H(A) \neq 0$ for all $\tau' \in \frak o_{\tau}$ if and only if the Newton polygon of $M_{\tau}$ equals $\ord_{\frak o_{\tau}}(n, \frak f)$. An application of Lemma~\ref{lemma orbits mu ordinary} completes the proof.
\end{proof}

\begin{proof}[Proof of {\rm \Th~\ref{th mu ordinary hasse invariant}}] \Cor~\ref{cor vanishing mu ha} establishes ($\mu$-Ha1). Properties ($\mu$-Ha2)-($\mu$-Ha4) are proved in exactly the same way as in \cite[Lemma 4.4.1, \Th 6.2.1]{WushiGalrepshldsI} \end{proof}

\section{Correction to \cite{WushiGalrepshldsI}} \label{sec correction}
Thanks to Jay Pottharst for pointing out the need to make the following minor modifications in \cite[\S6.2]{WushiGalrepshldsI}:  In the second and third sentences of the proof of \Th~6.2.1, the phrase ``is non-zero'' (resp. ``is also non-zero'') should be replaced with the phrase ``is a non-zero divisor'' (resp. ``is also a non-zero divisor''). Moreover, in the third sentence, the word ``separable'' should be replaced with the word ``finite''. Finally, in the fifth sentence, the phrase ``Since the product of two sections that are each non-zero modulo $\lambda$'' should be replaced with the phrase ``Since the product of a section which is non-zero modulo $\lambda$ with section which is a non-zero divisor modulo $\lambda$''.
\section*{Acknowledgments} \label{sec acknowledgements}

We thank G. Faltings for suggesting the idea of using crystalline cohomology to the second author. We are grateful to P. Deligne for his correspondence with the first author showing how to implement this idea in a concrete example.

\appendix
\section{The point of view of Ekedahl-Oort} \label{sec eo}
We keep the notation introduced in the main text. In particular, recall that $\mathfrak o_{\tau}$ denotes the orbit of embeddings of $\tau$ under Frobenius and $e_{\tau}$ denotes the cardinality of this orbit.

We begin by recalling Moonen's definition of ``standard ordinary objects'' \cite[1.2.3]{MoonenSerreTate}. Given an orbit $\frak o_{\tau}$ and its type $(n, \frak f)$, we have a Dieudonn\'e module $M^{{\ord}_{\frak o_{\tau}}}(n, \frak f)$ defined as follows: As $W(k)$-module, let $M^{{\ord}_{\frak o_{\tau}}}(n, \frak f)$ be the free module generated by the basis consisting of symbols $\epsilon_{\tau_i, j}$ such that $\tau_i \in \frak o_{\tau}$ and $1 \leq j \leq n$. On this basis, Frobenius acts by
\begin{equation} F(\epsilon_{\tau_i,j})=\left\{ \begin{array}{ccc} \epsilon_{\sigma \tau_i,j} & \mbox{ if } & \frak f(\tau_i) \leq n-j \\ p \epsilon_{\sigma \tau_i,j} & \mbox{ if } & \frak f(\tau_i) > n-j\end{array} \right. \label{eq def std ord obj 1} \end{equation}
and Verschiebung is given by
\begin{equation} V(\epsilon_{\sigma\tau_i,j})=\left\{ \begin{array}{ccc} p \epsilon_{ \tau_i,j} & \mbox{ if } & \frak f(\tau_i) \leq n-j \\ \epsilon_{\tau_i,j} & \mbox{ if } & \frak f(\tau_i) > n-j\end{array} \right. \label{eq def std ord obj 2} \end{equation}
Put $M_{\tau_i}^{{\ord}_{\frak o_{\tau_i}}}(n, \frak f)=\mbox{span}(\{\epsilon_{\tau_i, j}|1\leq j \leq n\})$. Note that the module $M_{\tau_i}^{{\ord}_{\frak o_{\tau_i}}}(n, \frak f)$ is stable under $F^{e_{\tau_i}}$.

The key role played by the modules $M^{{\ord}_{\frak o_{\tau}}}(n, \frak f)$ and $M_{\tau_i}^{{\ord}_{\frak o_{\tau_i}}}(n, \frak f)$ stems from the following result of Moonen:
\begin{thm}[Moonen \cite{MoonenSerreTate}, \Th 1.3.7] Let $A$ be a geometric point of the special fiber $sh$. Then $A$ is $\mu$-ordinary if and only if the Dieudonn\'e module of $A$ is isomorphic to \begin{equation} \bigoplus_{{\rm orbits } \ \frak o_{\tau}}M^{{\ord}_{\frak o_{\tau}}}(n, \frak f)^{\oplus r}. \label{eq moonen my p and final ordinary}\end{equation} \label{th moonen mu p and final ordinary} \end{thm}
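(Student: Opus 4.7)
The plan is to prove both implications by combining Katz's Hodge-Newton decomposition with the classification of isoclinic $F$-crystals over an algebraically closed field. The $B$-action commutes with Frobenius and produces the $r$-fold multiplicity uniformly on both sides of~\eqref{eq moonen my p and final ordinary}, so one reduces immediately, orbit-by-orbit, to matching a single copy of the Dieudonn\'e module along each $\frak o_\tau$.

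For the ``if'' direction, I would compute directly from the formulas defining the standard ordinary object. Iterating $F$ once around the orbit, \eqref{eq def std ord obj 1} yields $F^{e_\tau}(\epsilon_{\tau_i, j}) = p^{a_j}\epsilon_{\tau_i, j}$ with $a_j = \card(\{\tau' \in \frak o_\tau \mid \frak f(\tau') > n-j\})$, matching exactly the slopes from~\eqref{eq def slope ord}. Hence the Newton polygon of $(M^{{\ord}_{\frak o_{\tau}}}(n, \frak f), \mathbf F^{e_\tau})$ equals $\ord_{\frak o_\tau}(n, \frak f)$, and Lemma~\ref{lemma orbits mu ordinary} then implies $\mu$-ordinarity.

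For the ``only if'' direction, assume $A$ is $\mu$-ordinary, set $M = H^1_{\crys}(A)$, and fix an orbit $\frak o_\tau$. By Lemmas~\ref{lemma orbits mu ordinary} and~\ref{lemma hodge newton polygon}, both the Newton and Hodge polygons of $(M_{\tau_i}, \mathbf F^{e_\tau})$ coincide with $\ord_{\frak o_{\tau}}(n, \frak f)$, so Katz's Hodge-Newton decomposition \cite[1.6.1]{KatzSlopeFiltration} yields $M_{\tau_i} = \bigoplus_{j=0}^{e_\tau} M_{\tau_i}^{[j]}$ into isoclinic $F^{e_\tau}$-subcrystals of integer slope $j$, with rank equal to the multiplicity $\#\{\ell : a_\ell = j\}$. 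Since the residue field is algebraically closed, each $M_{\tau_i}^{[j]}$ admits a basis on which $\mathbf F^{e_\tau}$ acts by $p^j \sigma^{e_\tau}$. Because $\mathbf F$ preserves slopes it maps $M_{\tau_i}^{[j]}$ to $M_{\sigma\tau_i}^{[j]}$, and the $p$-adic divisibility of each transition map is pinned down by the Hodge polygon at $\sigma\tau_i$ (cf.\ Theorem~\ref{th fil1 slope decomp}) to match the pattern prescribed by~\eqref{eq def std ord obj 1}: namely, $\mathbf F$ picks up a factor of $p$ on the basis vector of index $\ell$ precisely when $\frak f(\tau_i) > n-\ell$. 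One then chooses orbit-compatible bases $\{\epsilon_{\tau_i, \ell}\}$ realizing~\eqref{eq def std ord obj 1} verbatim, and the relation $FV = p$ forces Verschiebung to agree with~\eqref{eq def std ord obj 2}.

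The main obstacle will be the orbit-wide bookkeeping: the bases of the slope pieces $M_{\tau_i}^{[j]}$ must be chosen so that the $e_\tau$ transition maps $F: M_{\tau_i} \to M_{\sigma\tau_i}$ simultaneously take the prescribed form, and then compatibility with the $B$-action and the polarization on the full $M$ must be verified. The coincidence of Newton and Hodge polygons supplied by Lemma~\ref{lemma hodge newton polygon} pins down the local divisibilities of the transitions, while the rigidity of integer-slope isoclinic crystals over the algebraically closed residue field, combined with the freedom to rescale within each piece, should supply enough flexibility to align the chosen bases with the standard model --- this is the essence of Moonen's original argument.
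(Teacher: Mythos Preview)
The paper does not give its own proof of this statement: it is quoted as Moonen's theorem (\cite[\Th~1.3.7]{MoonenSerreTate}) and used as a black box, with no argument supplied. There is therefore nothing in the paper to compare your proposal against.

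That said, a brief remark on content. Your ``if'' direction is correct and in fact coincides with the computation the paper performs in the lines immediately following the statement: the diagonalization of $\mathbf F^{e_\tau}$ on $M_{\tau_i}^{\ord_{\frak o_{\tau_i}}}(n,\frak f)$ with eigenvalues $p^{a_j}$, combined with Lemma~\ref{lemma orbits mu ordinary}, does give $\mu$-ordinarity. For the ``only if'' direction, your outline via the Hodge--Newton decomposition and the rigidity of integer-slope isoclinic crystals is reasonable, but the substantive step --- choosing orbit-compatible bases so that all $e_\tau$ transition maps simultaneously match~\eqref{eq def std ord obj 1}, and then checking compatibility with the polarization and the $\mathcal O_B$-action --- is exactly where you defer to ``the essence of Moonen's original argument.'' So at that point your proposal is a plan rather than a proof, and the only benchmark is Moonen's paper itself, not anything in the present one.
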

Henceforth assume $A$ is a $\mu$-ordinary geometric point of the special fiber $sh_{K^{(p)},\gerp}$. We identify the Dieudonn\'e module~\eqref{eq moonen my p and final ordinary} with $H^1_{\crys}(A)$ in such a way that the Frobenii $F$ and $\mathbf F$ correspond to one another. The submodule $M_{\tau_i}^{{\ord}_{\frak o_{\tau_i}}}(n, \frak f)$ then corresponds to $H^1_{\crys}(A)_{\tau_i}$.

In the basis $\mathcal B_{d_i}=\{\epsilon_{\tau_i,j}| 1\leq j \leq n\}$, the matrix of $\mathbf F^{e_{\tau_i}}$ acting on $M_{\tau_i}^{{\ord}_{\frak o_{\tau_i}}}(n, \frak f)$ is the diagonal matrix $\diag(p^{a_1}, \ldots ,p^{a_n})$, where the $a_j$ are the slopes of $\ord_{\frak o_{\tau_i}}(n, \frak f)$, whose definition was recalled in~\eqref{eq def slope ord}. Therefore the matrix of $\wedge^{d_i}\mathbf F^{|\frak o_{\tau_i}|}$ in the basis $\mathcal B_{d_i}=\{\epsilon_{\tau_i,j_1}\wedge \cdots \wedge \epsilon_{\tau_i,j_{d_i}} | 1 \leq j_1 < \cdots <j_{d_i}\leq n\}$ is the diagonal matrix with entry \begin{equation} p^{a_{j_1}+\cdots+a_{j_{d_i}}} \end{equation} corresponding to the basis vector $\epsilon_{\tau_i,j_1}\wedge \cdots \wedge \epsilon_{\tau_i,j_{d_i}}$. Since $c_i$ is the sum of the $d_i$ smallest slopes of $M_{\tau_i}^{{\ord}_{\frak o_{\tau_i}}}(n, \frak f)$, we see that $\wedge^{d_i}\mathbf F^{e_{\tau_i}}$ is divisible by $p^{c_i}$, thus reproving Lemma~\ref{lemma hd crys divisibility}.

Applying Mazur's theorem (\Th~\ref{th frobenius determines the hodge filtration}) to~\eqref{eq def std ord obj 1}, we see that \begin{equation} \fil^1_{\tau_i}=\overline{\mbox{span}(\{\epsilon_{\tau_i,j}|\frak f(\tau_i)>n-j\})} \end{equation}

Hence \begin{equation} \fil^1H^{d_i}_{dR}(A)_{\tau_i}=\overline{\mbox{span}(\mathcal B_{d_i}-\{\epsilon_{\tau_i,1}\wedge \cdots \wedge \epsilon_{\tau_i,{d_i}}\})} \label{eq fil1 std ord obj}\end{equation}
It follows from the description of the matrix of $\wedge^{d_i}\mathbf F^{|\frak o_{\tau_i}|}$ in the basis $\mathcal B_{d_i}$ that $p^{c_i+1}$ divides $\wedge^{d_i}\mathbf F^{e_{\tau_i}}(\mbox{span}(\mathcal B_{d_i}-\{\epsilon_{\tau_i,1}\wedge \cdots \wedge \epsilon_{\tau_i,{d_i}}\}))$. Combining this with~\eqref{eq fil1 std ord obj} reproves Lemma~\ref{lemma fil1 zero}. We also get that $\wedge^{d_i}\mathbf F^{ e_{\tau_i}}/p^{c_i}$ is non-zero on $\Grad^0 H^{d_i}_{dR}(A)_{\tau_i}$, from which we recover the ``if'' part of \Cor~\ref{cor vanishing mu ha}.

\section{An elementary construction for the case $F^+ = \mathbf Q$} \label{sec F^+=Q}

Suppose that $F^+ = \mathbf Q$, and therefore $\mathbf G (\mathbf R) = \mathbf{GU}(a,b)$ for some positive integers $a, b$. Assume henceforth, without loss of generality, that $a \leq b$. The assumption of \S\ref{sec main result} that $p$ is a prime of good reduction for $\mathcal U$ implies that $p$ is unramified in $E$. If $a=b$ then $E=\mathbf Q$, so $p$ is necessarily split in $E$ and the classical ordinary locus is open dense. Hence we assume from now on that $a<b$ and that $p$ is inert in $E$. It follows that the Hodge bundle $\Omega$ decomposes over $E$ as \begin{equation} \Omega=\Omega_{a}^{\oplus r} \oplus \Omega_{b}^{\oplus r} \label{eq hodge bundle decomposition}, \end{equation} where $\Omega_{a}$ (resp. $\Omega_{b}$) has rank $a$ (resp. $b$) and $r$ is the rank of $B$ over $F$. Let $\omega_{a}$ (resp. $\omega_{b}$) be the determinant of $\Omega_{a}$ (resp. $\Omega_{b}$).

Let $\mathcal A$ be an abelian scheme representing the universal isogeny class above $sh$.
The Verschiebung ${\rm Ver:} \ \mathcal A\ul \rightarrow A$ induces a map \begin{equation} {\rm Ver}^*: \Omega  \rightarrow \Omega\ul \label{eq ver hodge bundle}. \end{equation} Since $\frak p$ is inert, the restrictions of ${\rm Ver}^*$ to $\Omega_{a}$ (resp. $\Omega_{\mathcal K\ul,b}$) have the form \begin{equation} {\rm Ver}^*_{| \Omega_{a}}:\Omega_{a} \longrightarrow \Omega_{b}^{(p)} \mbox{ and } {\rm Ver}^*_{| \Omega_{b}}:\Omega_{b} \longrightarrow \Omega_{a}^{(p)} \label{eq ver permutes a b} . \end{equation}

Therefore, if $({\rm Ver}^*)^2$ denotes the composite of ${\rm Ver}^*$ with itself, then we have \begin{equation} ({\rm Ver}^*)^2_{| \Omega_{a}}: \Omega_{a} \longrightarrow \Omega_{a}\uls \label{eq ver2}.  \end{equation}


Let \begin{equation}  ^{\mu}\!h(\mathcal A): \omega_{ a} \longrightarrow \omega_{ a} ^{ p^{2}} \label{eq mu hasse map}, \end{equation} be the top exterior power of that map, where we have used that $\omega_{ a}^{(p^{2})}= \omega_{ a} ^{p^{2}}$ since $\omega_{a}$ is a line bundle. The map $^{\mu}\!h(\mathcal A)$ induces a global section \begin{equation} ^{\mu}\!H(\mathcal A) \in H^0(sh, \omega_{a}^{p^2-1}) \label{eq mu hasse invariant}. \end{equation}

If $\mathcal B$ is another representative of the universal isogeny class above $sh$ and $\varphi: \mathcal A \rightarrow \mathcal B$ is an isogeny compatible with the endomorphism actions of $\mathcal A, \mathcal B$, then as in \cite[\S4.2]{WushiGalrepshldsI}, the compatibility of Verschiebung with isogenies (Lemma 4.2.3 of \loccit) implies that $\varphi^*(^\mu\!H(\mathcal B))= {^\mu\!H}(\mathcal A)$. Hence we may omit reference to the representatives $\mathcal A$ or $\mathcal B$ and we have a section $^\mu \!H \in H^0(sh, \omega_{a}^{p^{2}-1}$.

\begin{rmk}
Note that applying the above construction is entirely done modulo $p$. Applying it to $\omega_{b}$ gives nothing but the zero section. With hindsight, this shows the necessity to lift our setup to characteristic zero to divide by higher powers of $p$.
\end{rmk}

\begin{lem} \label{lemma simpleNP}
 The Newton polygon $\mathcal N^{\rm ord}$ of the underlying isogeny class of abelian schemes of a $\mu$-ordinary geometric point of $sh$ has the following slopes:
 \[
\left. \begin{array}{l} 0 \\ 1/2 \\ 1 \end{array}
\right\} \text{with multiplicity } \left\{ \begin{array}{r} 2ar \\ 2(b-a)r \\ 2ar \end{array} \right.
\]

\end{lem}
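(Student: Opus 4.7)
The plan is to directly unwind Moonen's description of the $\mu$-ordinary polygon in this very constrained setting. First I would identify the Frobenius orbit structure on embeddings: since $F/\mathbf Q$ is imaginary quadratic and, by the standing assumption $a<b$, the prime $p$ is inert in $F$, the set $\mathcal T = \{\tau, \bar\tau\}$ forms a single orbit $\frak o$ of cardinality $e_{\frak o} = 2$. The signature $(a,b)$ of $\mathbf{GU}(a,b)$ translates into the multiplication type $\{\frak f(\tau_1), \frak f(\tau_2)\} = \{b, a\}$, where the labelling is chosen so that $\frak f(\tau_1) \geq \frak f(\tau_2)$; the common rank is $n = a+b$.

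Next I would read off the polygon $\mathrm{ord}_{\frak o}(n, \frak f)$ straight from formula~\eqref{eq def slope ord}. A direct case split on $j \in \{1,\ldots,n\}$ gives
\begin{equation*}
a_j = \#\{\tau' \in \frak o : \frak f(\tau') > a+b-j\} = \begin{cases} 0 & 1 \leq j \leq a, \\ 1 & a+1 \leq j \leq b, \\ 2 & b+1 \leq j \leq a+b. \end{cases}
\end{equation*}
So $\mathrm{ord}_{\frak o}(n, \frak f)$ has slopes $0$, $1$, $2$ with multiplicities $a$, $b-a$, $a$. By Lemma~\ref{lemma orbits mu ordinary}, for a $\mu$-ordinary geometric point $A$ this is precisely the Newton polygon of $(M_{\tau_i}, \mathbf F^{e_{\frak o}}) = (M_{\tau_i}, \mathbf F^2)$.

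Finally I would pass from the $\tau$-graded pieces to the full abelian-scheme Newton polygon. One has $H^1_{\crys}(A) = \bigoplus_{\tau \in \mathcal T} M_\tau^{\oplus r}$, and $\mathbf F$ cycles between the two factors $M_{\tau_1}$ and $M_{\tau_2}$. Consequently every slope $\lambda$ of $(M_{\tau_i}, \mathbf F^2)$ of multiplicity $m$ contributes a slope $\lambda/2$ of multiplicity $2m$ to the Newton polygon of $(\bigoplus_{\tau \in \frak o}M_\tau, \mathbf F)$; incorporating the extra factor of $r$ from the $B$-multiplicity then yields exactly the claimed slopes $0$, $1/2$, $1$ with multiplicities $2ar$, $2(b-a)r$, $2ar$. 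There is no real obstacle: the only nontrivial input is the identification of the $\mu$-ordinary polygon per orbit (Lemma~\ref{lemma orbits mu ordinary}, i.e.\ \cite[\Th 1.3.7]{MoonenSerreTate}), which has already been invoked; everything else is a bookkeeping calculation, with Poincar\'e duality providing the consistency check that the slopes $0$ and $1$ appear with equal multiplicity.
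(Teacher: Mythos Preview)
Your argument is correct. The computation of the slopes of $\mathrm{ord}_{\frak o}(n,\frak f)$ via~\eqref{eq def slope ord} is accurate, and the passage from the Newton polygon of $(M_{\tau_i},\mathbf F^2)$ to that of $(H^1_{\crys}(A),\mathbf F)$ is the standard dictionary (slopes scale by $1/e_\tau$, multiplicities by $e_\tau$, then by $r$).

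The paper's own proof is not a computation at all: it simply cites \cite[2.3.2]{Wed} for the case $r=1$ and then \cite[1.3.1, 3.2.9]{MoonenSerreTate} to handle general $r$. Your route is more self-contained, in that it uses only the formula~\eqref{eq def slope ord} and Lemma~\ref{lemma orbits mu ordinary}, both of which are already stated in the body of the paper; in effect you are redoing the relevant special case of Wedhorn's calculation by hand. The advantage of your approach is that nothing outside the paper needs to be unpacked; the advantage of the paper's approach is brevity and the observation that the result is already in the literature. Substantively the two agree, since \cite[2.3.2]{Wed} is precisely the computation you carried out.
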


\begin{proof} The case $r=1$ follows from \cite[2.3.2]{Wed}. The case of general $r$ follows subsequently from \cite[1.3.1 and 3.2.9]{MoonenSerreTate}.
\end{proof}
\begin{prop} The $\mu$-ordinary locus is the maximal $p$-rank stratum of $sh$. \label{cor mu ord max p rank}
\end{prop}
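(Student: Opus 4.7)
The plan is to compare the $p$-rank of an arbitrary geometric point $A$ of $sh$ with that of a $\mu$-ordinary point, using only the ingredients already in place: Lemma~\ref{lemma simpleNP} plus the symmetry, convexity, and Rapoport--Richartz constraint on Newton polygons of abelian varieties. Recall that the $p$-rank of $A$ equals the multiplicity of the slope $0$ in the Newton polygon of $H^1_{\crys}(A)$. Applying this observation to $\mathcal N^{\rm ord}$, Lemma~\ref{lemma simpleNP} shows immediately that every $\mu$-ordinary point has $p$-rank exactly $2ar$. The content of the proposition is the converse statement: any geometric point of $sh$ with $p$-rank $\geq 2ar$ is automatically $\mu$-ordinary.

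To establish the upper bound, I would invoke the Rapoport--Richartz version of Mazur's inequality (already used in the proof of \Th~\ref{th vanishing tau ha}, cf. \cite[Lem.~1.3.4]{MoonenSerreTate}): the Newton polygon $N$ of $A$ lies on or above $\mathcal N^{\rm ord}$. Since $\mathcal N^{\rm ord}$ begins with a segment of slope $0$ of length exactly $2ar$ and then strictly climbs with slope $1/2$, the polygon $N$ cannot remain at height $0$ past abscissa $x = 2ar$ without dipping below $\mathcal N^{\rm ord}$. Concretely, if the $p$-rank $m$ satisfied $m > 2ar$, then $N$ would pass through $(m,0)$ while $\mathcal N^{\rm ord}$ would be at height $(m-2ar)/2 > 0$ at $x=m$, contradicting the Mazur bound. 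Hence $m \leq 2ar$ for every geometric point of $sh$.

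For the equality case, I would exploit the $\lambda \leftrightarrow 1 - \lambda$ symmetry of the Newton polygon of an abelian variety. If the $p$-rank is exactly $2ar$, then slope $1$ has multiplicity $2ar$ as well, so $N$ passes through the two points $(2ar,0)$ and $(2br,(b-a)r)$, which are precisely the break points of $\mathcal N^{\rm ord}$. Between them, $\mathcal N^{\rm ord}$ is the chord of slope $1/2$. Since Newton polygons are convex (slopes nondecreasing), the middle portion of $N$ lies on or below this chord; Mazur's inequality simultaneously forces it on or above. Thus $N = \mathcal N^{\rm ord}$, and \cite[\Th~1.3.7]{MoonenSerreTate} (equivalently \Th~\ref{th moonen mu p and final ordinary}) then identifies $A$ as $\mu$-ordinary.

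The heart of the argument, and its only real subtlety, is the interplay between Mazur's inequality, which pushes $N$ upward, and convexity, which pushes its middle portion downward between two fixed break points of $\mathcal N^{\rm ord}$; these two opposite constraints force coincidence and so rigidify the Newton stratum of maximal $p$-rank into the $\mu$-ordinary stratum. Apart from this observation, every ingredient is already documented in the main text or in the cited literature, so no further technical obstacle is expected.
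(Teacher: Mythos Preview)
Your argument is correct and follows essentially the same route as the paper's proof: both rely on Lemma~\ref{lemma simpleNP}, the Rapoport--Richartz/Mazur lower bound for the Newton polygon, and the $\lambda \leftrightarrow 1-\lambda$ symmetry to force $N=\mathcal N^{\rm ord}$ when the $p$-rank is maximal. The only difference is cosmetic: you spell out the convexity-vs-Mazur squeeze on the middle segment explicitly, whereas the paper compresses this into a single sentence; and your final appeal to \Th~\ref{th moonen mu p and final ordinary} is unnecessary, since $\mu$-ordinariness is here characterized by the Newton polygon, so $N=\mathcal N^{\rm ord}$ is already the conclusion.
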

\begin{proof} The key point is that, by \cite[Prop. 2.4(iv) and \Th 4.2]{RapRic}, the Newton polygon $\mathcal N^{\rm ord}$ described in Lemma~\ref{lemma simpleNP} is the lowest among the Newton polygons of the underlying isogeny classes of abelian schemes corresponding to geometric points of $sh$. Let $A$ be an abelian scheme with Newton polygon $\mathcal N(A)$. Then $\mathcal N(A)$ is symmetric and the $p$-rank of $A$ is the multiplicity of 0 (=the multiplicity of 1) as a slope of $\mathcal N(A)$.  But if the multiplicity of 0 in $\mathcal N(A)$ is at least the multiplicity of 0 in $\mathcal N^{\rm ord}$ and $\mathcal N(A)$ lies on or above $\mathcal N^{\rm ord}$, then by Lemma~\ref{lemma simpleNP} we must have $\mathcal N(A)=\mathcal N^{\ord}$.
\end{proof}
\begin{cor} The maximal $p$-rank stratum of $sh$ has $p$-rank $2ar$. \label{cor max p rank 2ar} \end{cor}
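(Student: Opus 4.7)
The plan is to combine the two immediately preceding results: Proposition~\ref{cor mu ord max p rank} identifies the maximal $p$-rank stratum of $sh$ with the $\mu$-ordinary stratum, and Lemma~\ref{lemma simpleNP} computes the Newton polygon of a $\mu$-ordinary geometric point explicitly. So the work reduces to reading off the $p$-rank from the Newton polygon data.

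First, I recall the standard fact that for an abelian variety $A$ over an algebraically closed field of characteristic $p$, the $p$-rank equals the multiplicity of the slope $0$ in the Newton polygon of $A$ (which, by symmetry of the Newton polygon of an abelian variety, is also the multiplicity of slope $1$). Next, I apply Proposition~\ref{cor mu ord max p rank}: since the maximal $p$-rank stratum coincides with the $\mu$-ordinary locus, it suffices to compute the $p$-rank of a single $\mu$-ordinary geometric point.

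Finally, I invoke Lemma~\ref{lemma simpleNP}, which tells us that the Newton polygon $\mathcal{N}^{\rm ord}$ of a $\mu$-ordinary point has slope $0$ occurring with multiplicity $2ar$. Therefore the $p$-rank of any $\mu$-ordinary geometric point is $2ar$, and by the identification in Proposition~\ref{cor mu ord max p rank} this is exactly the $p$-rank of the maximal $p$-rank stratum of $sh$.

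There is no serious obstacle here; the corollary is essentially a bookkeeping statement. The only potential subtlety worth flagging is the symmetry argument used to identify the $p$-rank with the multiplicity of slope $0$ (as opposed to slope $1$), but this is immediate from the fact that the Newton polygon of an abelian variety is symmetric about $1/2$, which is already implicitly used in the proof of Proposition~\ref{cor mu ord max p rank}.
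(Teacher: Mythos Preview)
Your proposal is correct and matches the paper's own proof, which simply says the corollary follows directly from Lemma~\ref{lemma simpleNP} and the proof of Proposition~\ref{cor mu ord max p rank}. The identification of the $p$-rank with the multiplicity of slope $0$ that you spell out is exactly the ingredient drawn from the proof of Proposition~\ref{cor mu ord max p rank}.
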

\begin{proof} This follows directly from Lemma~\ref{lemma simpleNP} and the proof of Prop. ~\ref{cor mu ord max p rank}.
\end{proof}

\begin{lem} Suppose $A$ is an abelian scheme which is a representative of the underlying isogeny class of a geometric point of $sh$. Then $^\mu\!H(A) \neq 0$ if and only if the $p$-rank of $A$ is equal to $2ar$. \label{lemma hasse p rank}\end{lem}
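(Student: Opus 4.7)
The plan is to identify $^\mu H(A)$ with (up to a semilinear twist) the determinant of $\bar V^2$ acting on $\fil^1_\tau$, and then to use the slope decomposition of the crystalline Dieudonn\'e module of $A$ to read off when this determinant is non-zero.

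First, I would reinterpret $^\mu H(A)$ Dieudonn\'e-theoretically. By construction, $^\mu h(A)$ is the top exterior power of $({\rm Ver}^*)^2: \Omega_a(A) \to \Omega_a(A)^{(p^2)}$, so $^\mu H(A) \neq 0$ iff this map between $a$-dimensional spaces is bijective. Setting $M := H^1_{\crys}(A/W(\bar k))$ with its $\mathcal O_F$-action (yielding $M = M_\tau \oplus M_{\bar\tau}$, swapped by both $F$ and $V$ since $p$ is inert), together with Hodge filtration $\fil^1 \subset \bar M$ (preserved by $V$ because $\bar V(\bar M) \subset \ker\bar F = \fil^1$), the map $({\rm Ver}^*)^2$ corresponds, after Morita reduction by an idempotent of $B$, to the restriction of the $\sigma^{-2}$-semilinear operator $\bar V^2$ to the $ar$-dimensional space $\fil^1_\tau$. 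Thus $^\mu H(A) \neq 0$ iff $\bar V^2|_{\fil^1_\tau}$ is bijective.

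Next, I would carry out a slope analysis. For $\mathbf{GU}(a,b)$ with $p$ inert, the Newton polygon of every geometric point of $sh$ has slopes in $\{0, 1/2, 1\}$---consistent with Lemma~\ref{lemma simpleNP} (for the minimal polygon) and, for non-$\mu$-ordinary points, with the Rapoport--Richartz inequality recalled in the proof of Proposition~\ref{cor mu ord max p rank}. Write $f$ for the $p$-rank of $A$; then the multiplicity of slope $0$, and by symmetry of slope $1$, equals $f$. Over $\bar k$, Dieudonn\'e--Manin provides an integral slope decomposition $M = M^{[0]} \oplus M^{[1/2]} \oplus M^{[1]}$. From $V^2 F^2 = p^2$, on $M^{[s]}$ the operator $V^2$ equals $p^{2(1-s)}$ times a unit, so $\bar V^2$ vanishes on $\overline{M^{[0]} \oplus M^{[1/2]}}$ and is bijective on $\overline{M^{[1]}}$. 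Moreover $F \equiv 0 \pmod p$ on $M^{[1]}$, whence $\overline{M^{[1]}} \subset \ker \bar F = \fil^1$. Finally, the $F$-swap of $M_\tau$ and $M_{\bar\tau}$ respects the slope decomposition, giving $\dim M_\tau^{[s]} = \dim M^{[s]}/2$; in particular $\dim \overline{M_\tau^{[1]}} = f/2$ and $\overline{M_\tau^{[1]}} \subset \fil^1_\tau$.

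Combining these: $\bar V^2(\fil^1_\tau) \subset \bar V^2(\bar M_\tau) = \overline{M_\tau^{[1]}}$, and conversely $\bar V^2(\fil^1_\tau) \supset \bar V^2(\overline{M_\tau^{[1]}}) = \overline{M_\tau^{[1]}}$, so the image $\bar V^2(\fil^1_\tau)$ equals $\overline{M_\tau^{[1]}}$, which has dimension $f/2$. Hence $\bar V^2|_{\fil^1_\tau}$ is bijective iff $ar = f/2$, i.e., $f = 2ar$. The main technical obstacles I anticipate are (i) making precise the Dieudonn\'e-theoretic identification of $({\rm Ver}^*)^2$ with $\bar V^2|_{\fil^1_\tau}$, carefully tracking the semilinear twists and the Morita reduction by $B$, and (ii) justifying that only the slopes $\{0, 1/2, 1\}$ arise for this Shimura datum.
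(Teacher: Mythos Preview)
Your approach is different from the paper's and contains a genuine error, though the overall strategy can be repaired.

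The error is in the slope analysis. You assert that on an isoclinic piece $M^{[s]}$ the operator $V^2$ equals $p^{2(1-s)}$ times a unit of the lattice $M^{[s]}$. This is correct for $s\in\{0,1\}$ (where $F$, respectively $V$, is already a lattice automorphism), but it fails for $s=1/2$ and for any other fractional slope: on the isocrystal one has $V^2=p\cdot u$ with $u$ an automorphism of $M^{[1/2]}\otimes\mathbf Q$, but $u$ need not preserve the lattice. Concretely, for the rank-$4$ slope-$1/2$ crystal with $F:e_1\mapsto e_2\mapsto e_3\mapsto pe_4\mapsto pe_1$ one computes $\bar V^2(\bar e_1)=\bar e_3\neq 0$. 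Consequently your equality $\bar V^2(\bar M_\tau)=\overline{M_\tau^{[1]}}$ is false in general, and so is the displayed identity $\bar V^2(\fil^1_\tau)=\overline{M_\tau^{[1]}}$. (A related point: Dieudonn\'e--Manin is a statement about isocrystals; it does not furnish an integral direct-sum decomposition of $M$.)

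Both problems --- and your acknowledged obstacle (ii) about restricting the slopes to $\{0,1/2,1\}$, which is in fact not true for general $a\geq 2$ --- disappear if you replace the slope decomposition by the canonical splitting $M=M^{\text{\'et}}\oplus M^{\text{ll}}\oplus M^{\text{mult}}$ coming from the connected--\'etale sequence of $A[p^\infty]$ and its dual. This always exists over an algebraically closed field, is respected by the $\mathcal O_F$-action, and yields $\fil^1_\tau=(\fil^1_\tau)^{\text{ll}}\oplus \overline{M_\tau^{\text{mult}}}$. On the multiplicative summand $\bar V^2$ is bijective; on the local-local summand $\bar V$ (hence $\bar V^2$) is \emph{nilpotent}, not zero. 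Therefore $\bar V^2|_{\fil^1_\tau}$ is bijective iff $(\fil^1_\tau)^{\text{ll}}=0$ iff $\dim\overline{M_\tau^{\text{mult}}}=ar$ iff the $p$-rank equals $2ar$.

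For comparison, the paper's proof is entirely elementary and avoids crystalline cohomology: it invokes Mumford's identification of the $p$-rank with the semisimple rank of Frobenius on $H^1(A,\mathcal O_A)$ (equivalently of ${\rm Ver}^*$ on $H^0(A,\Omega^1_A)$), notes that $({\rm Ver}^*)^j$ is semisimple for $j\geq a+b$, and then bounds ranks using the factorization through the rank-$a$ bundle $\Omega_a$. This is in the spirit of Appendix~\ref{sec F^+=Q}, whose point is precisely to bypass crystalline methods when $F^+=\mathbf Q$.
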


\begin{proof} One has $H^1(A, \mathcal O_A) \cong H^0(A, \Omega^1_A)$ and under this isomorphism the action of Frobenius on $H^1(A, \mathcal O_A)$ corresponds to that of Verschiebung on $H^0(A, \Omega^1_A)$. Hence \cite[\S15]{MumAbVar} implies that the $p$-rank of $A$ equals the semisimple rank of $({\rm Ver}^*)^j:\Omega \rightarrow \Omega^{(p^j)}$  for all $j \in \mathbf N$. Since $\dim A=(a+b)r$, keeping in mind~\eqref{eq hodge bundle decomposition} and using the last corollary of \S14 of \loccit, $({\rm Ver}^*)^j$ is semisimple for $j \geq a+b$. Therefore the $p$-rank of $A$ equals the rank of $({\rm Ver}^*)^j$ for $j \geq a+b$. We take the $(a+b)$th power of the section $^\mu\!H(A)$, see (\ref{eq mu hasse map}). It is clear that  $^\mu\!H(A) \neq 0$ if and only if  $^\mu\!H(A)^n \neq 0$ for any $n \in \mathbf N_{>0},$ in particular for $n=a+b$.

Since $a \leq b$, both ${\rm Ver}^*_{|\Omega_{a}}$ and ${\rm Ver}^*_{|\Omega_{b}}$ have rank at most $a$. So also $({\rm Ver}^*)^j_{|\Omega_{\mathcal K\ul,a}}$ and $({\rm Ver}^*)^j_{|\Omega_{\mathcal K\ul,b}}$ each have rank at most $a$. By ~\eqref{eq hodge bundle decomposition}, $({\rm Ver^*})^j$ has rank at most $2ar$.

The $p$-rank of $A$ equals $2ar$ if and only if the rank of $({\rm Ver}^*)^j$ is $2ar$ for $j \geq a+b$. In turn, the rank of $({\rm Ver}^*)^j$ is $2ar$ if and only if both ${\rm Ver}^*_{|\Omega_{a}}$ and ${\rm Ver}^*_{|\Omega_{b}}$ have rank $a$. Since $\Omega_{a}$ and $\Omega_{a}^{(p^{2(a+b)})}$ are rank $a$ vector bundles, the determinant of a map between them is nonzero if and only if it has rank $a$.
\end{proof}

The main properties of the Hasse invariant follow by standard arguments. For the liftability to characteristic zero, we may cite \cite[Prop. 7.14]{LanSuhgen} (or \cite{LanSuhcpt} in the compact case), to argue that there exists $k \in \mathbf N$ such that $\omega_{a}^{k(p^{2}-1)}$ itself extends to an ample line bundle on the minimal compactification $sh^{\rm min}$.

\bibliographystyle{amsalpha}
\bibliography{galrepsholodsunitary}

\providecommand{\bysame}{\leavevmode\hbox to3em{\hrulefill}\thinspace}
\providecommand{\MR}{\relax\ifhmode\unskip\space\fi MR }
\providecommand{\MRhref}[2]{%
  \href{http://www.ams.org/mathscinet-getitem?mr=#1}{#2}
}
\providecommand{\href}[2]{#2}
\begin{thebibliography}{Mum08}

\bibitem[BO78]{BerOgu}
P.~Berthelot and A.~Ogus, \emph{Notes on crystalline cohomology}, Princeton
  Univ. Press, 1978.

\bibitem[Dem72]{Dem}
M.~Demazure, \emph{Lectures on $p$-divisible groups}, Lecture Notes in
  Mathematics, vol.~{\bf 302}, Springer-Verlag, Berlin, 1972.

\bibitem[dJ95]{de-jong-crystalline}
A.~J. de~Jong, \emph{Crystalline {D}ieudonn\'e module theory via formal and
  rigid geometry}, Publ. Math. IHES (1995), no.~82, 5--96.

\bibitem[GN13a]{GoldringNicole}
W.~Goldring and M.-H. Nicole, \emph{A $\mu$-ordinary {H}asse invariant},
  arXiv:1302.1614 (Feb. 2013).

\bibitem[GN13b]{GoldringNicole2}
\bysame, \emph{The $\mu$-ordinary {H}asse invariant of unitary {S}himura
  varieties}, arXiv:1305.6956 (May 2013).

\bibitem[Gol14]{WushiGalrepshldsI}
W.~Goldring, \emph{Galois representations associated to holomorphic limits of
  discrete series}, Compositio Math. \textbf{150} (2014), 191--228, with an
  appendix by S.-W. Shin.

\bibitem[HT01]{HT}
M.~Harris and R.~Taylor, \emph{The geometry and cohomology of some simple
  {S}himura varieties}, Annals of Math. Studies, vol.~{\bf 151}, Princeton
  Univ. Press, 2001.

\bibitem[Kat79]{KatzSlopeFiltration}
N.~Katz, \emph{Slope filtration of ${F}$-crystals}, Journ\'ees de g\'eom\'etrie
  alg\'ebrique de {R}ennes, Ast\'erisque, vol.~{\bf 63}, Soc. Math. {F}rance,
  1979, pp.~113--163.

\bibitem[KW14]{KoWe}
J.-S. Koskivirta and T.~Wedhorn, \emph{Generalized {H}asse invariants for
  {S}himura varieties of {H}odge type}, arXiv:1406.2178 (2014).

\bibitem[Lan13]{Lan}
K.-W. Lan, \emph{Arithmetic compactifications of {PEL}-type {S}himura
  varieties}, London Mathematical Society Monographs, vol.~36, Princeton
  University Press, 2013.

\bibitem[LS12]{LanSuhcpt}
K.-W. Lan and J.~Suh, \emph{Vanishing theorems for torsion automorphic sheaves
  on compact ${P}{E}{L}$-type {S}himura varieties}, Duke Math. J. \textbf{161}
  (2012), no.~6, 1113--1170.

\bibitem[LS13]{LanSuhgen}
\bysame, \emph{Vanishing theorems for torsion automorphic sheaves on general
  ${P}{E}{L}$-type {S}himura varieties}, Adv. Math. \textbf{242} (2013),
  228--286.

\bibitem[Moo04]{MoonenSerreTate}
B.~Moonen, \emph{Serre-{T}ate theory for moduli spaces of {PEL}-type}, Ann.
  Sci. ENS \textbf{37} (2004), no.~2, 223--269.

\bibitem[Mum08]{MumAbVar}
D.~Mumford, \emph{Abelian varieties}, Tata {I}nst. of {F}und. {R}es. {S}tudies
  in {M}ath., vol.~5, Hindustan Book Agency, New Delhi, India, 2008, With
  appendices by {C}. {P}. {R}amanujam and {Y}. {M}anin. Corrected reprint of
  the second (1974) edition.

\bibitem[RR96]{RapRic}
M.~Rapoport and M.~Richartz, \emph{On the classification and specialization of
  ${F}$-isocrystals with additional structure}, Comp. Math. \textbf{103}
  (1996), 153--181.

\bibitem[Wed99]{Wed}
T.~Wedhorn, \emph{Ordinariness in good reductions of {S}himura varieties of
  {PEL}-type}, Ann. Sci. ENS \textbf{32} (1999), no.~5, 575--618.

\end{thebibliography}
\end{document}